\documentclass{amsart}
\usepackage{amsmath,amssymb}
\usepackage{graphicx}
\newtheorem{theorem}{Theorem}[section]
\newtheorem{proposition}[theorem]{Proposition}
\newtheorem{corollary}[theorem]{Corollary}

\newtheorem{lemma}[theorem]{Lemma}
\newtheorem*{principle}{Heuristic principle}

\theoremstyle{definition}
\newtheorem{definition}[theorem]{Definition}
\newtheorem{assumption}{Assumption}
\newtheorem{example}[theorem]{Example}

\theoremstyle{remark}
\newtheorem{remark}[theorem]{Remark}

\numberwithin{equation}{section}

\newcommand{\al}{\alpha}

\newcommand{\de}{\delta}
\newcommand{\ep}{\epsilon}

\newcommand{\ga}{\gamma}

\newcommand{\la}{\lambda}
\newcommand{\om}{\omega}

\newcommand{\te}{\theta}

%
\newcommand{\De}{\Delta}
\newcommand{\Ga}{\Gamma}
\newcommand{\La}{\Lambda}
\newcommand{\Si}{\Sigma}
\newcommand{\Om}{\Omega}
%

%

%

%

%

\newcommand{\hG}{\widehat{G}}
\newcommand{\hF}{\widehat{\mathcal F}}

\newcommand{\hD}{\widehat{D}}

%
\def\CC{\mathbb{C}}
\def\NN{\mathbb{N}}
\def\RR{\mathbb{R}}

\def\ZZ{\mathbb{Z}}

\renewcommand\SS{\mathbb{S}}
\newcommand\BG{{\overline G}}
\newcommand\By{{\bar y}}


\newcommand{\cC}{{\mathcal C}}

\newcommand{\cF}{{\mathcal F}}
\newcommand{\cG}{{\mathcal G}}

\newcommand{\cL}{{\mathcal L}}
\newcommand{\cM}{{\mathcal M}}

\newcommand{\cP}{{\mathcal P}}

\newcommand\cZ{\mathcal Z}
%

%

%

\newcommand{\pd}{\partial}
\newcommand\minus\backslash

\newcommand{\ms}{\mspace{1mu}}
\newcommand\lan\langle
\newcommand\ran\rangle

%


\newcommand{\I}{{\mathrm i}}
\newcommand{\e}{{\mathrm e}}

\DeclareMathOperator\Real{Re}

\DeclareMathOperator\diag{diag} 
\DeclareMathOperator\dist{dist}

\newcommand\DD{\mathbb D}
\renewcommand\leq\leqslant
\renewcommand\geq\geqslant
%
\newlength{\intwidth}

%

%
\addtolength{\parskip}{3pt}

%

 \DeclareMathOperator\ind{ind}

\begin{document}

\title[Gradient dynamical systems
  on surfaces and Green's functions]{Gradient dynamical systems
  on open surfaces and critical points of Green's functions}

\author{Alberto Enciso}
\address{Instituto de Ciencias Matem\'aticas, Consejo Superior de
  Investigaciones Cient\'\i ficas, 28049 Madrid, Spain}
\email{aenciso@icmat.es}

\author{Daniel Peralta-Salas}
\address{Instituto de Ciencias Matem\'aticas, Consejo Superior de
  Investigaciones Cient\'\i ficas, 28049 Madrid, Spain}
\email{dperalta@icmat.es}

%
%
\begin{abstract}
  We study the dynamics of the vector field on an open surface given
  by the gradient of a Green's function. This dynamical approach
  enables us to show that this field induces an invariant
  decomposition of the surface as the union of a disk and a
  $1$-skeleton that encodes the topology of the surface. We analyze
  the structure of this $1$-skeleton, thereby obtaining, in particular, a
  topological upper bound for the number of critical points a Green's
  function can have. Connections between the dynamical properties of
  the gradient field and the conformal structure of the surface are
  also discussed.
\end{abstract}
\maketitle

\section{Introduction}
\label{S:intro}

Let $M$ be a noncompact surface without boundary of class $C^\infty$,
endowed with a smooth
complete Riemannian metric $g$. We denote
by 
\[
\cG: (M\times M)\minus\diag(M\times M)\to\RR
\]
a Green's function
of $M$, which is defined as a symmetric function (i.e., $\cG(x,y)=\cG(y,x)$) that
satisfies the equation
\begin{equation}\label{Gr}
\De_g\cG(\cdot,y)=-\de_y
\end{equation}
for each $y\in M$. That is to say, if one considers the action of the
Laplace--Beltrami operator of the manifold,
$\De_g$, on the Green's function $\cG(x,y)$ (with respect to the first
variable~$x$), one obtains a Dirac measure supported at the point $y$.

Our goal in this paper is to analyze the dynamical properties of the
gradient of the Green's function. For this, we will find it
notationally convenient to fix a point $y\in M$, once and for all, and
use the notation $G:=\cG(\cdot,y)$ for the Green's function with pole
$y$, which is smooth and harmonic in $M\minus\{y\}$. Therefore, the
gradient field we will study in this paper will be $\nabla_g G$.

The study of Green's functions is a central topic in Riemannian
geometry and geometric analysis. Hence, there is a vast related
literature covering, among many other aspects, the existence of
positive Green's functions~\cite{CY75,LT87}, upper and lower bounds,
gradient estimates and asymptotics~\cite{LY86,CM97}, and the
connection between Green's functions and the heat
kernel~\cite{LTW97,GS02}.

What is somewhat surprising is that the dynamical properties of the
gradient field $\nabla_gG$ remain virtually unexplored, with the
exception of the classical work of Brelot and Choquet~\cite{BC51}. Of
course, a key issue in the study of this vector field is the analysis
of the critical set of the Green's function. This question is of
considerable interest by itself, and deeply related with other
problems, set in significantly easier contexts, that date back to the
1950s (see e.g.~\cite{Wa50,CM69,Mo70,Sh80} and references
therein). Indeed, some of these articles were motivated in part by the
fact that in Euclidean space the Green's function arises as the
electric potential of a charged particle, so that its critical points
correspond to equilibria and the trajectories of its gradient field
are the force lines studied by Faraday and Maxwell in the XIX century
(nontrivial contributions to this problem were made in the recent
paper~\cite{Eremenko}). As a side remark, let us point out that
another elliptic PDE (very different from~\eqref{Gr}) in which the
analysis of the critical points of its solutions has recently
attracted considerable attention can be found in~\cite{Lin}.

One reason why the study of the dynamical properties of the gradient
field $\nabla_gG$ (and in particular of the critical set of $G$) is
hard to tackle, from the point of view of geometric analysis, is that
the estimates for Green's functions obtained through PDE methods are
not sufficiently fine to elucidate whether the gradient of $G$
vanishes in a certain region. Moreover, the noncompactness of the
underlying surface introduces complications related to the behavior of
the Green's function at infinity. 

In this paper we will show how these difficulties can be overcome by
exploiting the conformal properties of the surface and resorting to
techniques of gradient dynamical systems. Our approach will lead to a
topological upper bound for the number of critical points of the
Green's function and a complete description of the local and global
dynamics of the gradient field $\nabla_gG$.

To some extent, the core of this paper is the remarkable heuristic
principle we will now state, which links the dynamics of the
gradient field $\nabla_gG$, defined in terms of solutions to an
elliptic PDE, with the topology of the underlying surface. It must be
stressed that this principle will be promoted to a rigorous statement
(after introducing the necessary tools and notation) in
Corollary~\ref{C.cF} and Theorem~\ref{P.hF2}:

\begin{principle}\label{principle}
  Suppose that the surface $M$ is of finite type. Then $M$ can
  be decomposed as the union of a disk~$D$ and a (possibly
  disconnected) noncompact graph $\cF$, both of which are invariant
  under the local flow of the gradient field $\nabla_gG$. The disk
  consists of the pole $y$ and the points of $M$ whose $\om$-limit is
  $y$. The graph $\cF$ consists of the critical points of $G$, their
  stable components, and certain trajectories of $\nabla_gG$ that
  escape to infinity. When suitably compactified, $\cF$ is a connected
  graph that encodes the topology of the surface, the rank of the
  first homology group of $\cF$ being twice the genus of $M$.
\end{principle}

The characterization of the set $\cF$ that will emerge from the
rigorous version of this heuristic principle yields, as a nontrivial
application, the  following topological upper bound for the number of critical
points of the Green's function:

\begin{theorem}\label{T:main}
  Suppose that the surface $M$ is of finite type, that is, its
  fundamental group has finite rank. Then the number of critical
  points of any Green's function $G$ on $M$ is not larger than twice
  the genus of $M$, $\nu$, plus the number of ends, $\la$, minus~$1$:
\[
\# \ms\text{critical points}\leq 2\nu+\la-1\,.
\]
If this upper bound is attained then $G$ is a Morse function.
\end{theorem}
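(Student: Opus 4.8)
The plan is to turn the decomposition $M=D\cup\cF$ of the Heuristic principle---in the rigorous form given by Corollary~\ref{C.cF} and Theorem~\ref{P.hF2}---into a global index computation on a compactification of the surface. Since $M$ is an orientable surface of finite type, it is diffeomorphic to a closed surface $\BM$ of genus $\nu$ with $\la$ punctures; let $q_1,\dots,q_\la\in\BM$ be the ideal points filling in the ends. Gluing the disk $D$ to the compactified graph $\overline{\cF}$ exhibits $\BM$ as a CW complex with $1$-skeleton $\overline{\cF}$ and a single $2$-cell, so that $\chi(\BM)=\chi(\overline{\cF})+1=1-b_1(\overline{\cF})+1=2-2\nu$, the last equality being precisely the homological statement of the Principle. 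The strategy is then to compute the index of the line field spanned by $\nabla_gG$ at each of its singularities on $\BM$ and to sum them by Poincar\'e--Hopf; away from $y$, the critical set of $G$, and the points $q_j$, the field does not vanish, so these are the only contributions.

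Two of the three types of singularity are immediate. At the pole, $G\sim-\tfrac1{2\pi}\log\dist_g(\cdot,y)$, so $\nabla_gG$ points towards $y$ and contributes index $+1$. At a critical point $p$, harmonicity lets me write $G$ locally as the real part of a holomorphic function $F$ whose derivative vanishes to some order $m_p-1\ge1$ at $p$; thus $G-G(p)=\Real\!\big(c\,(z-z_p)^{m_p}\big)+o(|z-z_p|^{m_p})$ with $c\ne0$ and $m_p\ge2$, the index of $\nabla_gG$ at $p$ equals $1-m_p$, and $p$ is nondegenerate exactly when $m_p=2$. (Indices are conformally invariant, so they may be read off in a conformal coordinate even though the metric gradient differs from the Euclidean one by a positive factor.)

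The \emph{decisive and most delicate step} is the contribution of the ends. Here I would invoke the description of the escaping trajectories of $\cF$ near infinity furnished by Theorem~\ref{P.hF2}: near each $q_j$ the function $G$ is harmonic and, since the only pole of the Green's function is at $y$, it cannot grow polynomially there, so in a conformal coordinate $w$ vanishing at $q_j$ one has $G=\al_j\log|w|+\Real(\text{holomorphic})$. If $\al_j\ne0$ the radial logarithmic term dominates and $q_j$ is a source or a sink of $\nabla_gG$, of index $+1$; if $\al_j=0$ then $G$ extends harmonically across $q_j$ and the index is $1-k_j\le1$, where $k_j\ge1$ is the vanishing order there. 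In all cases the index at an end is at most $1$. Ruling out polynomial (pole-type) growth at infinity and identifying the logarithmic coefficient with the flux through the end is exactly the point at which the noncompactness of $M$ must be confronted, and is where I expect the real work to lie; it is supplied by the analysis underlying Theorem~\ref{P.hF2}.

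It then remains to assemble the count. Writing $N$ for the number of critical points, Poincar\'e--Hopf on $\BM$ gives
\[
\sum_{p}(1-m_p)+1+\sum_{j=1}^{\la}\iota_j=\chi(\BM)=2-2\nu,
\]
where $\iota_j$ is the index at $q_j$; equivalently $\sum_p m_p=N+2\nu-1+\sum_j\iota_j$. Using $m_p\ge2$ and $\iota_j\le1$,
\[
2N\leq\sum_p m_p=N+2\nu-1+\sum_{j=1}^{\la}\iota_j\leq N+2\nu-1+\la,
\]
so $N\leq2\nu+\la-1$, as claimed. Finally, if this bound is attained both inequalities must be equalities: in particular $m_p=2$ for every critical point, so each is nondegenerate and $G$ is a Morse function.
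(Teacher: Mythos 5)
Your overall strategy---compactify, assign index $+1$ to the pole, index $1-m_p\le-1$ to each critical point, index $\iota_j\le 1$ to each end, and conclude by Poincar\'e--Hopf---is exactly the ``Upper bound'' half of the paper's proof of Theorem~\ref{T.main2}, and your arithmetic is correct. The genuine gaps are concentrated in the step you yourself single out as decisive, and they are not merely deferred work: the mechanism you propose there is wrong for half the cases. Your analysis of the ends tacitly assumes that every end is a conformal puncture, since you posit a conformal coordinate $w$ vanishing at the ideal point $q_j$ and expand $G=\al_j\log|w|+\Real(\text{holomorphic})$. By the uniformization theorem (Theorem~\ref{T.unif}) an end may instead be \emph{hyperbolic}, i.e.\ conformally a deleted closed disk $D_j\subset\Si$: such an end is an annulus of positive modulus, no conformal coordinate centered at the ideal point exists, and the logarithmic expansion is meaningless. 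This is not a marginal case---it is precisely the case in which a positive minimal Green's function exists (Proposition~\ref{P.hypends}), e.g.\ the hyperbolic disk. The paper handles it by collapsing each $D_j$ to a point and passing to a reparametrized field $X=F\,\Psi_*(\nabla_{g_0}\BG)$ that extends smoothly; the index $+1$ then comes from the fact that $\BG\to 0$ on $\pd D_j$ with nonvanishing normal derivative, so the collapsed point is an isolated minimum of the gradient-like extension. Relatedly, even at parabolic ends the exclusion of pole-type growth does \emph{not} follow from ``the only pole of $\cG$ is at $y$'': the equation $\De_gG=-\de_y$ constrains nothing at infinity (the paper's own example $\cG=-\tfrac1{2\pi}\log|x-y|+h(x)+h(y)$ on $\RR^2$ shows the bound fails for general Green's functions). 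What rules it out is Assumption~\ref{P1} together with the removable-singularity theorem for one-sidedly bounded harmonic functions, as in Proposition~\ref{P.parends}.

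Second, Poincar\'e--Hopf on $\BM$ requires the extended field to have \emph{isolated} singularities, and you sum $\sum_p(1-m_p)$ as if the critical set were finite. A priori the critical points, though isolated in $M$ by Proposition~\ref{L:halfb}, could accumulate at an ideal point $q_j$, making $q_j$ a non-isolated singularity and invalidating the index count. The paper devotes the entire ``Finiteness'' part of the proof of Theorem~\ref{T.main2} to excluding this, by showing that infinitely many critical points would generate infinitely many independent classes in $H_1(\Si;\ZZ)$, contradicting finite type. In your framework finiteness would instead follow from the local description of $\nabla_gG$ at each end---but that is exactly the part of your argument that is incomplete. Once these two points are supplied, your count goes through and reproduces the paper's proof (in a slightly coarser form: the paper's index count distinguishes removable singularities and hyperbolic ends, yielding the sharper bound $2\nu+\la'-1$ of Theorem~\ref{T.main2}, of which Theorem~\ref{T:main} is a corollary).
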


In fact, the analysis of the set $\cF$ does not only yield this
topological upper bound, but a more refined bound that exploits the
conformal structure of the surface (see Theorem~\ref{T.main2}). This
is particularly interesting because, as we shall see, it establishes
some links between the conformal geometry of the surface and the
portrait of the gradient field $\nabla_gG$. It should be stressed that
an analogous result does not hold for higher-dimensional Riemannian
manifolds, as shown in~\cite{EP12}.

A different proof of the estimates for the number of critical points
of $G$, relying on methods from elliptic PDEs, was recently given by
the authors in~\cite{EP12}. However, the dynamical approach taken in the present
paper provides a very satisfactory picture of the invariant sets
connecting the different critical points of $G$ and the dynamics of
the field $\nabla_gG$, which cannot be obtained with the PDE
techniques used in~\cite{EP12}.

The article is organized as follows. In Section~\ref{S:LiTam} we will
present some basic facts regarding Green's functions on surfaces,
including their obtention through an exhaustion procedure, their
behavior at infinity and their connection with the conformal structure
of the surface. In Section~\ref{S:Local} we describe the dynamics of
the field $\nabla_gG$ in a neighborhood of the pole $y$ or a critical
point. In Section~\ref{S:basin} we introduce a convenient
compactification of the surface and establish some key properties of
the sets $D$ and $\cF$ introduced in the Heuristic Principle above
(and of some compactifications thereof). The structure of the set
$\cF$ and its compactification is characterized in
Section~\ref{S:bounds}, which allows us to prove the upper bound for
the number of critical points of the Green's function. To conclude, in
Section~\ref{S:remarks} we discuss the connection between the dynamics
of the field $\nabla_gG$ and the conformal geometry of the underlying
surface and make some comments regarding surfaces of infinite
topological type.

\section{Green's functions on surfaces}
\label{S:LiTam}

In this section we will recall what a Green's function is and how to
obtain them using an exhaustion procedure, placing a special emphasis
on conditions ensuring that the Green's function is ``well behaved''
at infinity, which is a key ingredient in the analysis of the dynamical
properties of the vector field $\nabla_g G$. We will also discuss how
to exploit conformal isometries to classify the possible behavior of
the Green's function at the ends of the surface. Throughout this
paper, the surfaces will be of class $C^\infty$ and of finite
topological type (that is, with finitely generated fundamental group)
unless stated otherwise.

The reason why it is crucial to make assumptions on the behavior of
the Green's function at infinity can be readily seen even in the
simplest case: the Euclidean plane $\RR^2$. Indeed, if we let $h$ be
any harmonic function, it is clear that any function of the
form
\[
\cG(x,y)=-\frac1{2\pi}\log|x-y|+h(x)+h(y)
\]
is symmetric and satisfies the Green's function equation~\eqref{Gr}. 
The standard way of deciding which of these Green's functions
should be ``admissible'' is to demand that the Green's
function be obtained as a limit of Dirichlet Green's functions
associated with an exhaustion of the plane by compact subsets (more
details on this point will be given below). The only Green's function
arising from such an exhaustion procedure would be the usual one,
\begin{equation}\label{eqlog}
\cG(x,y)=-\frac1{2\pi}\log|x-y|\,,
\end{equation}
which is the relevant Green's function for all geometric or
analytical considerations. 

In a general Riemannian surface $(M,g)$, we will assume that the
Green's function $G$ we consider shares the following two properties
with the above Green's function~\eqref{eqlog} of the Euclidean
plane. The first assumption is a weak monotonicity property for the
Green's function on circles. The second assumption says that, when the
surface admits a {\em minimal (positive) Green's function}\/ (as is
the case of the hyperbolic plane, although not of the Euclidean one),
we should always consider this Green's function, for it plays a very
special role in analysis and geometry. Recall that a positive Green's
function is minimal when it is pointwise smaller than any other
positive Green's function. Observe that, if the surface
$(M,g)$ does not admit a minimal Green's function, then the infimum of
any Green's function $G$ over the surface is $-\infty$.

\begin{assumption}[Monotonicity]\label{P1}
The Green's function $G$ is nondecreasing in the sense that
\begin{equation}\label{decreasing}
\sup_{M\minus B_g(y,r)} G=\max_{\pd B_g(y,r)}G\qquad\text{for all }r>0\,,
\end{equation}
where $B_g(y,r)$ denotes the geodesic disk in $M$ centered at the pole $y$
of radius $r$.
\end{assumption}

\begin{assumption}[Minimality]\label{P2}
$G$ is the minimal Green's function whenever the Riemannian surface admits a positive Green's function.
\end{assumption}

It should be stressed that on any Riemannian surface $(M,g)$ there are
Green's functions that satisfy these assumptions. Indeed, the way one
shows there always exist Green's functions on any surface is by taking
a suitable limit of the Dirichlet Green's functions of an exhaustion
of the surface by bounded domains. Since the Green's functions
one obtains in this fashion actually satisfy the above
assumptions, {\em throughout this paper we will assume that
  the Green's function satisfies Assumptions~\ref{P1} and~\ref{P2}.}
For the benefit of the reader, we will next review this construction
of Green's functions through an exhaustion by compact sets, which was introduced in the
context of general Riemannian manifolds by Li and Tam~\cite{LT87}.  To
simplify the exposition, we will consider the case of Green's
functions $G(x)$ with a fixed pole $y$, but actually the procedure
automatically yields the symmetric function $\cG(x,y)$.

Consider an exhaustion $\Om_1\subset\Om_2\subset\cdots$ of the surface
$M$ by bounded domains. We can assume without loss of generality that
the pole $y$ belongs to the first domain~$\Om_1$. The idea now is to
impose Dirichlet boundary conditions on each bounded domain $\Om_j$ and consider the
corresponding Green's function $G_j:\Om_j\minus\{y\}\to\RR$, which
satisfies the equation
\[
\De_g G_j=-\de_y\quad\text{in }\Om_j\,,\qquad G_j=0\quad \text{on
}\pd\Om_j\,.
\]
One would be tempted to define the Green's function $G$ as the limit
of $G_j$ as $j\to\infty$. However, this limit does not exist in
general. What can be proved (see e.g.~\cite{LT87}) is that, for any
choice of the domains $\Om_j$, one can take a sequence of nonnegative real numbers
$(a_j)_{j=1}^\infty$ such that $G_j-a_j$ converges uniformly on
compact sets of $M\minus\{y\}$ to a Green's function $G$ with pole
$y$. Generally, the Green's functions obtained through this procedure are
non-unique, but they satisfy the monotonicity
assumption~\eqref{decreasing} and, when the surface admits a positive Green's function,
this construction always yields the minimal one.

Incidentally, it is worth pointing out that Green's functions do not
exist on closed surfaces, which is the reason why we just consider
noncompact surfaces in this paper. To see why, it is enough to suppose
that there is a solution of Eq.~\eqref{Gr} in a closed surface $M$
and, with a slight abuse of notation,
integrate both sides of the equation over the whole surface and
integrate by parts, which would yield the contradiction
\[
0=\int_M\De_g G\,=-\int_M \de_p=-1\,.
\]

Let us now pass to describe how one can utilize conformal isometries
to analyze the behavior of the Green's function at each end. Recall
that two Riemannian surfaces $(M,g)$ and $(\cM,g_0)$ are {\em
  conformally isometric} if there is a diffeomorphism $\Phi:\cM\to M$
and a smooth positive function $f$ on $\cM$ such that $\Phi^* g= f
g_0$. An important property of the Laplace equation on surfaces is its
{\em conformal invariance}, that is, if $G(x)$ satisfies the equation
\[
\De_gG=-\de_y
\]
on the surface~$M$ for some point $y$, and another
surface $\cM$ is
conformally isometric to $M$ through a diffeomorphism $\Phi:\cM\to M$,
then 
\[
\BG(x):=G(\Phi(x))
\]
is a Green's function of $\cM$ with pole $\By:= \Phi^{-1}(y)$:
\begin{equation}\label{eqBG}
\De_{g_0}\BG=-\de_{\By}\,.
\end{equation}
Furthermore, the corresponding gradient fields are orbitally conjugated through
the relation
\begin{equation}\label{conjug}
\nabla_{g_0}\BG=f\,\Phi^*(\nabla_gG)\,.
\end{equation}

A key result in the conformal geometry of surfaces, which will be of great
use in this paper, is the uniformization theorem:

\begin{theorem}[Uniformization]\label{T.unif}
There is a compact surface $\Si$ of genus $\nu$ with a metric of
constant curvature $g_0$, a certain number $\la_1\geq0$ of isolated points $p_i\in\Si$ and
another number $\la_2\geq0$ of closed disks $D_i\subset\Si$ with smooth
boundary such that the Riemannian surface $M$ is
conformally isometric to $(\cM,g_0)$, with
\begin{equation}\label{cM}
\cM:=\Si\ms\minus\bigg(\bigcup_{i=1}^{\la_1}\{p_i\}\cup
\bigcup_{j=1}^{\la_2} D_j\bigg)\,.
\end{equation}
\end{theorem}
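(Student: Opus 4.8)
The plan is to combine the classical uniformization theorem for compact Riemann surfaces with the topological classification of surfaces of finite type, the bridge between the two being a conformal analysis of the ends of $M$. Since $g$ endows the oriented surface $M$ with a complex structure whose conformal class is precisely that of $g$ (if $M$ is non-orientable one argues on the orientation double cover and descends the resulting metric), it suffices to exhibit a closed Riemann surface $\Si$, finitely many points $p_i$ and closed disks $D_j$ so that $M$ is \emph{biholomorphic} to $\cM=\Si\minus(\bigcup_i\{p_i\}\cup\bigcup_j D_j)$; the constant-curvature metric $g_0$ will then be obtained by uniformizing the compact surface $\Si$. As a first, purely topological, input I would use that $\pi_1(M)$ is finitely generated, so the classification of noncompact surfaces shows that $M$ has finite genus $\nu$ and finitely many ends, $\la$ of them, each possessing a neighborhood $N_k\cong\SS^1\times[0,\infty)$ whose compact boundary $\pd N_k$ may be taken to be a smooth Jordan curve. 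The whole argument then reduces to understanding each $N_k$ as a Riemann surface.

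The central step is the conformal classification of the ends. Each $N_k$ is a doubly connected Riemann surface (a ring domain) with one analytic boundary circle, so by the standard classification of ring domains it is biholomorphic to a round annulus $\{r_k<|z|<1\}$ with $0\le r_k<1$, where $|z|=1$ corresponds to $\pd N_k$ and the end lies on the side $|z|\to r_k$. Two cases arise. If $r_k=0$ the end is a punctured disk, which I would declare a parabolic end and cap by adjoining the single point $z=0$, producing a marked point $p_i$. If $r_k>0$ the end is a funnel whose conformal boundary is the circle $|z|=r_k$, and I would cap it by gluing in the closed disk $\{|z|\le r_k\}$, a smooth closed conformal disk $D_j$. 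Performing these cappings simultaneously at every end yields a closed Riemann surface $\Si$; since capping with points and disks leaves the genus unchanged, $\Si$ has genus $\nu$, carries $\la_1$ marked points (the parabolic ends) and $\la_2$ closed disks (the hyperbolic ends), and by construction $M$ is biholomorphic to $\cM=\Si\minus(\bigcup_i\{p_i\}\cup\bigcup_j D_j)$.

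To finish, I would invoke the uniformization theorem on the closed surface $\Si$ to produce, in the conformal class of $\Si$, a metric $g_0$ of constant curvature (its sign dictated by the sign of the Euler characteristic). Writing $\Phi:\cM\to M$ for the biholomorphism obtained above and restricting $g_0$ to $\cM$, the coincidence of complex structures forces $\Phi^*g=f\,g_0$ for some smooth positive function $f$, which is exactly the statement that $M$ is conformally isometric to $(\cM,g_0)$.

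I expect the conformal classification of the ends to be the main obstacle. One must show that each annular end is \emph{exactly} a standard punctured disk or round annulus, ruling out intermediate behavior, and, in the funnel case, that the inner conformal boundary is a genuine smooth circle across which the capped metric $g_0$ extends smoothly. This is where conformal mapping theory for ring domains—and the boundary regularity of the uniformizing map up to the analytic boundary—is essential; once it is secured, the topological bookkeeping of genus and ends and the passage from biholomorphism to conformal isometry are routine.
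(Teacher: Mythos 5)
The paper offers no proof of this statement: it is quoted as a classical fact (the uniformization/conformal classification of Riemann surfaces of finite type), so there is no internal argument to compare yours against. Your reconstruction follows the standard route and is essentially correct: pass to the complex structure determined by $g$, use the classification of finite-type surfaces to write $M$ as a compact core with $\la$ annular end neighborhoods $N_k\cong\SS^1\times[0,\infty)$, classify each open end conformally as $\{r_k<|z|<1\}$ with $0\leq r_k<1$, cap by the disk $\{|z|<1\}$ (glued biholomorphically along $\{r_k<|z|<1\}$) to obtain a closed Riemann surface $\Si$ of the same genus, and finally put a constant-curvature metric $g_0$ in the conformal class of $\Si$ by compact uniformization; conformal isometry of $M$ with $\cM$ then follows because a biholomorphism between surfaces carrying conformal metrics pulls one metric back to a positive multiple of the other. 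The one step that carries real analytic content is the one you flag yourself: you must show that the boundary component of the annular end corresponding to $\pd N_k$ is non-degenerate, i.e.\ that the uniformization cannot put a puncture (or the $\CC\minus\{0\}$ degeneration) on the $\pd N_k$ side. This is where the hypothesis that $\pd N_k$ is a smooth Jordan curve with a two-sided collar in $M$ enters: extending $N_k$ slightly produces an essential compact sub-annulus of positive modulus separating $\pd N_k$ from the end, and monotonicity of the modulus (or Schwarz reflection across the analytic curve $\pd N_k$) forces that side to be a genuine circle $|z|=1$; only the end at infinity can degenerate to $r_k=0$. With that point secured, and with the minor caveat that the statement as used in the paper is for orientable $M$ (so your double-cover remark is not needed), the bookkeeping of genus, marked points $p_i$ and closed disks $D_j=\{|z|\leq r_k\}$ is exactly as you describe.
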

As is customary, we will call the deleted points $\{p_1,\dots,
p_{\la_1} \}$ the {\em parabolic ends}\/ of the surface $M$, while the
deleted disks $\{D_1,\dots, D_{\la_2}\}$ are its {\em hyperbolic
  ends}\/. Furthermore, we will say that the parabolic end $p_i$ is a {\em
  removable singularity}\/ if the function $\BG$ can be extended so as
to satisfy the equation
\[
\De_{g_0}\BG=0
\]
in a neighborhood of $p_i$ in $\Si$. It should be noticed that an end
being parabolic or hyperbolic is a geometric property of the surface,
related to its conformal structure. However, whether a parabolic end
is removable or not is not a geometric issue, as it depends not only
on the surface $M$ but also on the Green's function we consider.

In the following two propositions we will relate the existence of
parabolic and hyperbolic ends with the behavior of the 
function $\BG$ at each deleted point or disk:

\begin{proposition}\label{P.parends}
If all the ends of the surface $(M,g)$ are parabolic, the surface does
not admit any positive Green's functions. Moreover, at each end $p_i$
we have that either
\begin{equation}\label{minf}
\lim_{x\to p_i} \BG(x)=-\infty
\end{equation}
or $p_i$ is a removable singularity. There is at least one point $p_i$ where
the condition~\eqref{minf} is satisfied.
\end{proposition}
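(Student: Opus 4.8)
The plan is to reduce everything to a local analysis of $\BG$ near each parabolic end and then to close the argument with a global flux identity coming from \eqref{eqBG}. Since all ends are assumed parabolic, Theorem~\ref{T.unif} identifies $M$ conformally with $\cM=\Si\minus\{p_1,\dots,p_{\la_1}\}$ (there are no deleted disks, so $\la_2=0$, and $\la_1\geq1$ because $M$ is noncompact). The pole $\By$ lies in $\cM$, away from the $p_i$, so in a punctured neighborhood of each $p_i$ the function $\BG$ is $g_0$-harmonic. Because in dimension two harmonicity is conformally invariant, working in a holomorphic coordinate $z$ centered at $p_i$ I may regard $\BG$ as a genuine Euclidean harmonic function on a punctured disk $0<|z|<\delta$, so the whole proposition becomes a statement about the isolated singularity of such a function at $z=0$.

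The central step is the classification of this singularity. First I would record the standard structure of a harmonic function on a punctured disk: after subtracting a multiple of $\log|z|$ to kill the period of the conjugate differential, one can write $\BG=\alpha_i\log|z|+\Real f_i(z)$ with $\alpha_i\in\RR$ and $f_i$ holomorphic on $0<|z|<\delta$. The key input is Assumption~\ref{P1}: fixing any $r_0>0$, the bound \eqref{decreasing} controls $G$ on $M\minus B_g(y,r_0)$ by the finite constant $\max_{\pd B_g(y,r_0)}G$, and since every end lies outside all compact sets this shows that $\BG$ is bounded above near each $p_i$. A holomorphic function whose real part is bounded above near an isolated singularity can have neither a pole nor an essential singularity there (by Casorati--Weierstrass its real part would otherwise be unbounded above), so $f_i$ extends holomorphically across $z=0$; and $\alpha_i\log|z|$ is bounded above near $0$ only if $\alpha_i\geq0$. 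This yields the dichotomy: if $\alpha_i=0$ then $\BG=\Real f_i$ extends harmonically across $p_i$, i.e.\ $p_i$ is a removable singularity, whereas if $\alpha_i>0$ then $\BG=\alpha_i\log|z|+O(1)\to-\infty$, which is \eqref{minf}. I expect this upgrade from ``bounded above'' to ``logarithmic-plus-removable with $\alpha_i\geq0$'' to be the main obstacle, although it is elementary once the monotonicity bound is in place.

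It remains to produce at least one end with $\alpha_i>0$ and to rule out positive Green's functions. For the former I would integrate $\De_{g_0}\BG=-\de_{\By}$ over $\Si$ with small geodesic disks around $\By$ and around each $p_i$ removed; on this region $\BG$ is harmonic, so the divergence theorem forces the total outward flux of $\nabla_{g_0}\BG$ to vanish. The circle around $\By$ contributes $+1$ (from the normalization $\BG\sim-\tfrac1{2\pi}\log\dist(\cdot,\By)$), while $\Real f_i$, being harmonic across $p_i$, contributes no net flux, so the circle around $p_i$ contributes exactly $-2\pi\alpha_i$; hence
\[
\sum_{i=1}^{\la_1}\alpha_i=\frac1{2\pi}>0\,.
\]
Since this finite sum is positive, some $\alpha_i>0$, and at that end \eqref{minf} holds, giving the last assertion. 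Finally, suppose $M$ admitted a positive Green's function. Then Assumption~\ref{P2} would force our $G$ to be the minimal one, hence $G>0$ everywhere on $M$; but $\BG=G\circ\Phi\to-\infty$ at the end just found, a contradiction. Therefore $M$ admits no positive Green's function, completing all three claims.
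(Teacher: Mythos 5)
Your proof follows essentially the same route as the paper's: Assumption~\ref{P1} bounds $\BG$ from above near each parabolic end, the isolated singularity is then classified as removable or logarithmic with nonnegative coefficient (the paper cites Gilbarg--Serrin for precisely this step and phrases it as $\De_{g_0}\BG=c_i\ms\de_{p_i}$ with $c_i=2\pi\alpha_i\geq0$), and your flux identity $\sum_i\alpha_i=1/2\pi$ is exactly the paper's integration of the resulting distributional identity over the closed surface $\Si$; your explicit contradiction with Assumption~\ref{P2} to rule out positive Green's functions is a welcome addition, since the paper leaves that first claim implicit. The one soft spot is the parenthetical justifying the classification: what is known to be bounded above is $\BG=\alpha_i\log|z|+\Real f_i$, not $\Real f_i$ itself, so Casorati--Weierstrass does not apply verbatim; the standard repair is to observe that $|\e^{f_i}|=\e^{\BG}|z|^{-\alpha_i}$ grows at most polynomially, hence $\e^{f_i}$ extends meromorphically and is nonvanishing at $0$, and single-valuedness of $f_i$ then forces $\e^{f_i}$ to be holomorphic and nonzero there, after which $\alpha_i\geq0$ follows from the upper bound as you say.
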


\begin{proof}
When the number of hyperbolic ends $\la_2$ is $0$, it follows from
Eq.~\eqref{eqBG} that the function $\BG$ satisfies the equation
$\De_{g_0}\BG=0$ everywhere in $\Si$ but at the pole $\By$ and the isolated points
$p_i$. Furthermore, by Assumption~\ref{P1}, $\BG$ is upper
bounded at each point $p_i$. If it is also lower bounded, it is
standard that $p_i$ is a removable singularity~\cite{GS55} and $\De_{g_0}\BG=0$ in
a neighborhood of $p_i$. If $\BG$ is not lower bounded, $p_i$ is an
isolated singularity of $\BG$, and the fact that $\BG$ is upper
bounded readily implies that $\De_{g_0}\BG=c_i\de_{p_i}$ in a neighborhood of
$p_i$ for some non-negative constant~$c_i$. Hence
\begin{equation}\label{juntardes}
\De_{g_0}\BG=-\de_{\By}+\sum_{i=1}^{\la_1}c_i\,\de_{p_i}
\end{equation}
in the closed manifold $\Si$, so integrating this equation over $\Si$
and using that $\int_\Si \De_{g_0}\BG=0$ we infer that $\sum_i
c_i=1$. Hence $c_i$ is positive for some $i$ and, in view of the
asymptotic behavior of any Green's function at a pole, it stems that
$\BG$ tends to $-\infty$ at the corresponding point $p_i$.
\end{proof}

\begin{proposition}\label{P.hypends}
If the surface $(M,g)$ has at least one hyperbolic end, there is a
minimal positive Green's function $G$. The corresponding function $\BG$ tends
to zero at the boundary of each disk $D_i$ and all the parabolic ends
$p_i$ are removable singularities.
\end{proposition}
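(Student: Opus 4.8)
The plan is to produce the minimal positive Green's function explicitly, as a Dirichlet Green's function on a compact surface with boundary, and then read off its behavior at the ends directly from that description. Since there is at least one hyperbolic end, i.e.\ $\la_2\geq1$, the set $\Sigma_0:=\Si\minus\bigcup_{j=1}^{\la_2}\inte D_j$ is a compact smooth surface with nonempty boundary $\pd\Sigma_0=\bigcup_j\pd D_j$, and it contains the parabolic ends $p_i$ as honest interior points. On $\Sigma_0$ I would solve the Dirichlet problem $\De_{g_0}u=-\de_{\By}$ in $\Sigma_0$ with $u=0$ on $\pd\Sigma_0$, obtaining its Dirichlet Green's function $u$ with pole $\By$; since $\Sigma_0$ is compact with smooth boundary this solution exists and is unique, and the maximum principle together with the logarithmic blow-up at $\By$ forces $u>0$ in the interior. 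Restricting $u$ to $\cM$ and pulling it back by $\Phi$ gives a positive Green's function on $M$, so $M$ admits a positive Green's function and hence, by the exhaustion theory recalled above, a minimal one.

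The key step is to identify this minimal Green's function with $u$. I would take an arbitrary positive Green's function $\BG'$ on $\cM$ and set $v:=\BG'-u$, which is harmonic on $\inte\Sigma_0$ away from the punctures and from $\By$, the logarithmic pole singularities having cancelled so that $v$ extends across $\By$. On $\pd\Sigma_0$ one has $v=\BG'\geq0$ since $u$ vanishes there. Near each $p_i$ the function $u$ is smooth while $\BG'$ is positive and harmonic, so by the classification of isolated singularities of harmonic functions in two dimensions $\BG'$ either extends harmonically across $p_i$ or tends to $+\infty$ there; in either case $v$ has no negative singularity at $p_i$. Combining $v\geq0$ on $\pd\Sigma_0$, the removability at $\By$, and the nonnegative behavior at the $p_i$, the minimum principle yields $v\geq0$, that is, $u\leq\BG'$. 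Hence $u$ is the minimal positive Green's function, and by Assumption~\ref{P2} the function $\BG$ associated with our Green's function $G$ coincides with $u$.

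The conclusions about the ends then follow at once from the identification $\BG=u$. Since $u=0$ on $\pd\Sigma_0$ by construction, $\BG$ tends to zero at the boundary of each disk $D_i$, which is the asserted behavior at the hyperbolic ends. Since $u$ is smooth on all of $\Sigma_0$, and in particular across each interior point $p_i$, the function $\BG$ extends harmonically over every parabolic end, so each $p_i$ is a removable singularity. (Consistently, $\BG>0$ is bounded below, while Assumption~\ref{P1} bounds it above near each $p_i$, so removability can also be read off from the removable-singularity theorem of~\cite{GS55}.)

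I expect the main obstacle to be the minimality step, and within it the behavior at the punctures $p_i$: one must rule out that an arbitrary positive Green's function acquires a singularity there that would break the comparison, and carefully justify the minimum principle on $\inte\Sigma_0$ in the presence of the isolated points $p_i$ and of the cancelled pole at $\By$. The remaining ingredients---the existence, uniqueness and positivity of the Dirichlet Green's function of $\Sigma_0$, and the reading-off of the end behavior---are routine once $\BG=u$ has been established.
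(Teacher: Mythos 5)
Your proposal is correct and follows essentially the same route as the paper: the paper likewise identifies the minimal positive Green's function with the Dirichlet Green's function of the compact bordered surface $\Si\minus\bigcup_j \Int D_j$ and reads off the vanishing at $\pd D_j$ and the removability of the $p_i$ from that description. The only difference is one of detail — the paper cites the existence of a positive Green's function and the identification with the Dirichlet solution as standard, whereas you supply the comparison argument (the minimum principle applied to $\BG'-u$, with the singularities at $\By$ and the $p_i$ handled via removability and the Bôcher-type classification of positive harmonic singularities), which is a legitimate and complete way to fill in that step.
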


\begin{proof}
  When the number of hyperbolic ends is $\la_2\geq 1$, it is standard
  (for example, due to the existence of nonconstant positive harmonic
  functions in $\cM$~\cite{LT87}) that the surface
  admits a positive Green's function. Therefore,
  Assumption~\ref{P2} ensures that $G$ is the minimal Green's function
  of the surface, which corresponds to the unique solution $\BG$ of the
  boundary problem
\[
\De_{g_0}\BG=-\de_{\By}\quad \text{in } \Si\ms\minus
\bigcup_{j=1}^{\la_2} {D_j}\,,\qquad \BG=0\quad\text{on }\pd D_j\text{
  for all }j.
\]
In particular, if there are any parabolic ends, they are all removable singularities.
\end{proof}

Because of these propositions, {one can extend the function $\BG$ and
  the gradient field $\nabla_{g_0}\BG$ to all the removable
  singularities of the surface.} We will find it occasionally
convenient to consider this extension, which we will not distinguish
notationally (it will be clear from the context).  To conclude this section, we will present two examples
that illustrate the issue of uniqueness and non-uniqueness of Green's
functions on surfaces.

\begin{example}
  Consider the plane $\RR^2$ with its Euclidean metric. It has a
  parabolic end, so that it is conformally isometric to the round sphere
  $(\SS^2,g_0)$ minus a point $p$ via a diffeomorphism
  $\Phi:\SS^2\minus\{p\}\to\RR^2$. 

  The standard Green's function with pole $y$,
\begin{equation}\label{GR2}
G(x):=-\frac1{2\pi}\log|x-y|\,,
\end{equation}
obviously satisfies Assumption~\ref{P1}. Moreover, it is the only
Green's function satisfying this assumption, even though the plane
does not admit any positive Green's functions. To see this, take the
function $\BG$ corresponding to any Green's function satisfying
Assumption~\ref{P1}. Since the assumption is satisfied,
Eq.~\eqref{juntardes} in the proof of Proposition~\ref{P.parends}
shows that $\BG$ must satisfy the equation
\[
\De_{g_0}\BG=-\de_{\By}+\de_p
\]
in the whole $\SS^2$. Hence $\BG$ is uniquely determined, so that the
Green's function must be given by~\eqref{GR2}.
\end{example}

\begin{example}
Let us consider the flat cylinder $\RR\times\SS^1$ and natural
coordinates $(z,\te)$. It is conformally equivalent to the round sphere $(\SS^2,g_0)$
minus two points $\{p_1,p_2\}$, so it does not admit a positive
Green's function. 

A Green's function with pole at
$(z_0,\te_0)$ is
\[
G_1(z,\te):=-\frac1{4\pi}\log\big[ \cosh (z-z_0)-\cos(\te-\te_0)\big]\,.
\]
This Green's function satisfies Assumption~\ref{P1}. It is not the
only Green's function on the flat cylinder with this property; e.g.,
one can check that
\[
G_2(z,\te):= -\frac1{4\pi}\log\big[\tfrac12\e^{2z}+ \tfrac12\e^{2z_0}-\e^{z+z_0}\cos(\te-\te_0)\big]
\]
is another instance. These Green's functions are connected by the
identity
\[
G_2(z,\te)=G_1(z,\te)-\frac{z+z_0}{4\pi}\,.
\]
Notice that $G_1$ tends to $-\infty$ at both ends (that is, as
$z\to\pm\infty$) while $G_2$ tends to~$-\infty$ as $z\to\infty$ but
the end $z\to-\infty$ corresponds to a removable singularity of~$G_2$.
\end{example}

\section{Local dynamical properties of Green's functions}
\label{S:Local}

In this section we will carry out a local study of the dynamics of the
gradient of the Green's function in a neighborhood of the pole or a
critical point. Since the fields $\nabla_gG$ and $\nabla_{g_0}\BG$ are
orbitally conjugated (cf.\ Eq.~\eqref{conjug}),
for convenience we will work with the latter gradient field instead.

The first proposition we will prove here asserts that, when multiplied by a suitable
factor, the gradient vector field $\nabla_{g_0}\BG$ can be smoothly
linearized at the point $\By$, and that the corresponding normal form is
a stable node. In particular, the trajectories approach the pole
with a well-defined tangent.

\begin{proposition}\label{L.pole}
There are $C^\infty$ coordinates $(x_1,x_2)$, defined in a
neighborhood $U$ of the point $\By$ in $\cM$ and centered at $\By$, and a smooth nonnegative
function $\rho:U\to\RR$ that only vanishes at the pole, such that the
gradient field $\nabla_{g_0} \BG$ can be written as
\[
\rho\,\nabla_{g_0} \BG=- x_1\,\frac\pd{\pd x_1}-x_2\,\frac\pd{\pd x_2}
\]
in the domain $U$.
\end{proposition}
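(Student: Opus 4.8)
The plan is to reduce the statement to a local computation near $\By$ in isothermal coordinates, where the Green's function splits off an explicit logarithmic singularity, and then to smoothly linearize the resulting field by Sternberg's theorem. First I would use that $g_0$ has constant curvature (Theorem~\ref{T.unif}), hence is locally conformally flat, to choose $C^\infty$ isothermal coordinates $(x_1,x_2)$ centered at $\By$ in which $g_0=\e^{2\phi}(dx_1^2+dx_2^2)$ for a smooth function $\phi$. By the conformal invariance of the Laplace equation recorded in Section~\ref{S:LiTam}, the function $\BG$ is flat-harmonic in $U\minus\{\By\}$, and the normalization $\De_{g_0}\BG=-\de_{\By}$ fixes the strength of its singularity. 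By the standard description of isolated singularities of planar harmonic functions (this is the local asymptotics already invoked in the proof of Proposition~\ref{P.parends}), we obtain
\[
\BG(x)=-\frac1{2\pi}\log|x|+h(x)\,,
\]
with $h$ smooth and harmonic on $U$. This is the only place where the PDE enters.

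Next I would exhibit the conformal factor that removes the singularity. In these coordinates $\nabla_{g_0}\BG=\e^{-2\phi}\grad\BG$, where $\grad$ denotes the Euclidean gradient, and a direct computation gives $\grad\BG=-\tfrac1{2\pi}|x|^{-2}(x_1,x_2)+\grad h$. Taking
\[
\rho:=2\pi\ms\e^{2\phi}\ms|x|^2\,,
\]
which is smooth, nonnegative and vanishes only at $\By$, the factors $2\pi$ and $|x|^2$ cancel the singular term exactly, so that
\[
\rho\,\nabla_{g_0}\BG=-x_1\frac\pd{\pd x_1}-x_2\frac\pd{\pd x_2}+2\pi|x|^2\grad h\,.
\]
Thus $\rho\,\nabla_{g_0}\BG$ extends to a $C^\infty$ vector field $X$ on $U$ whose linear part at $\By$ is the linear node $L=-x_1\,\pd/\pd x_1-x_2\,\pd/\pd x_2$, the remainder $2\pi|x|^2\grad h$ vanishing to second order.

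It then remains to linearize $X$ smoothly. The eigenvalues of $L$ are both equal to $-1$, so $\By$ is a hyperbolic sink lying in the Poincar\'e domain (the convex hull of the eigenvalues avoids $0$); moreover a resonance relation $\lambda_s=\sum_j m_j\lambda_j$ with $\sum_j m_j\geq 2$ would force $-1=-\sum_j m_j$, which is impossible, so $X$ is nonresonant. By Sternberg's linearization theorem there is a $C^\infty$ diffeomorphism $\Psi$ of a neighborhood of $\By$, fixing $\By$ and with $D\Psi(\By)=\id$, that conjugates $X$ to its linear part $L$. Renaming the coordinates through $\Psi$ and replacing $\rho$ by $\rho\circ\Psi$ — which is again smooth, nonnegative, and vanishing only at $\By$ — yields the asserted normal form $\rho\,\nabla_{g_0}\BG=-x_1\,\pd/\pd x_1-x_2\,\pd/\pd x_2$.

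The main obstacle is the passage from topological to $C^\infty$ linearization: Hartman--Grobman alone would produce only a homeomorphism, so the argument genuinely hinges on the absence of resonances, which is exactly what makes Sternberg's theorem applicable and secures the smoothness of both the coordinates $(x_1,x_2)$ and the function $\rho$. A minor point to verify is that the conjugating diffeomorphism may be taken with identity linear part, ensuring that $\rho\circ\Psi$ retains the stated properties; this is automatic from the construction.
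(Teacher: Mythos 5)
Your argument is correct and follows essentially the same route as the paper: isothermal coordinates, the splitting $\BG=-\tfrac1{2\pi}\log|x|+h$ with $h$ harmonic, the explicit factor $\rho=2\pi f|x|^2$ producing a smooth field with linear part the standard node of eigenvalues $(-1,-1)$, and then a linearization theorem. The only (immaterial) difference is that you invoke Sternberg's $C^\infty$ linearization via nonresonance, whereas the paper appeals to Siegel's theorem to get an analytic linearizing diffeomorphism; both suffice for the stated $C^\infty$ conclusion.
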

\begin{proof}
Let us take local isothermal coordinates $u=(u_1,u_2)$ centered at $\By$,
in which the metric reads as
\[
g_0=f(u)\,\big( du_1^2+du_2^2\big)
\]
for a positive function $f$. Therefore, Eq.~\eqref{eqBG} can be written in these coordinates~as
\[
\frac{\pd^2\BG}{\pd u_1^2}+ \frac{\pd^2\BG}{\pd u_2^2}=-\de_0\,,
\]
so $\BG$ must be of the form
\[
\BG=-\frac1{2\pi}\log|u|+ h(u)\,,
\]
with $|u|^2:=u_1^2+u_2^2$ and $h$ a harmonic function:
\[
\frac{\pd^2h}{\pd u_1^2}+ \frac{\pd^2h}{\pd u_2^2}=0\,.
\]
The gradient of $\BG$ can then be expressed as
\[
\rho\, \nabla_{g_0}
\BG=-\bigg(u_1-2\pi|u|^2\frac{\pd h}{\pd u_1}\bigg)\,\frac\pd{\pd
  u_1}-\bigg(u_2-2\pi|u|^2\frac{\pd h}{\pd u_2}\bigg)\,\frac\pd{\pd u_2}\,,
\]
with $\rho(u):=2\pi f(u)\, |u|^2$. The origin is then a hyperbolic zero
of the vector field $\rho\, \nabla_{g_0}\BG$ (which can be smoothly extended
to the origin) and the corresponding eigenvalues are $(-1,-1)$. Hence Siegel's theorem
ensures that $\rho\, \nabla_{g_0}\BG$ can be linearized via a diffeomorphism that is an
analytic function of the coordinates $(u_1,u_2)$ and the claim follows.
\end{proof}

In the following proposition we will characterize the structure of the
trajectories of the field $\nabla_{g_0}\BG$ in a neighborhood of a
critical point of $\BG$. In particular, we see that the trajectories
approaching the critical point have a well-defined tangent. This
proposition can be seen as a dynamical analog of Cheng's result on the
critical points of eigenfunctions on surfaces~\cite{Ch76}. Let us
recall that the {\em stable (resp.\ unstable) set}\/ of a zero $z$ of
a vector field is given by the points whose $\om$-limit (resp.\
$\al$-limit) is exactly the point $z$.

\begin{proposition}\label{L:halfb}
  Let $z$ be a zero of the vector field $\nabla_{g_0}\BG$ (possibly a
  removable singularity $p_i$) and let $m\geq2$ be the degree of the
  lowest nonzero homogeneous term in the Taylor expansion of
  $\BG-\BG(z)$ at $z$ (which is always finite). Then $z$ is an isolated zero and the
  intersection of a small neighborhood of $z$ in $\Si$ with either its
  stable or unstable set is homeomorphic to the set
\[
\big\{\zeta\in\CC:\zeta^m\in[0,1) \big\}\,,
\]
In particular, the index of the point $z$ is $1-m$.
\end{proposition}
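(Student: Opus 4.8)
The plan is to reduce the statement to a model computation using the harmonicity of $\BG$ near the zero $z$. Working in the conformal picture, I would first take local isothermal coordinates $u=(u_1,u_2)$ centered at $z$ in which $g_0=f(u)(du_1^2+du_2^2)$ with $f>0$; by Proposition~\ref{P.hypends} the removable singularities carry a genuine harmonic extension of $\BG$, so in all cases $\BG$ is harmonic in a punctured neighborhood of $z$ and, since $z$ is not the pole, harmonic in a full neighborhood of $z$. Setting $w=u_1+\iu u_2$, a harmonic function is the real part of a holomorphic function $F(w)$, and because $\grad\BG$ vanishes at $z$ the derivative $F'$ has a zero there; if $m$ is the degree of the lowest nonvanishing homogeneous term of $\BG-\BG(z)$, then after a holomorphic change of coordinate $\ze=\ze(w)$ (tangent to the identity) one can bring $F$ into the normal form $F(w)=\BG(z)+\Real(\ze^m)$. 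This is the standard fact that a harmonic function with an order-$m$ critical point is conformally equivalent to $\Real(\ze^m)$; it immediately shows $z$ is isolated.

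With this normal form in hand, the level set $\{\BG=\BG(z)\}$ near $z$ is $\{\Real(\ze^m)=0\}$, which consists of $2m$ rays emanating from the origin at equal angles, dividing the neighborhood into $2m$ sectors on which $\Real(\ze^m)$ is alternately positive and negative. Since $\grad_{g_0}\BG=f^{-1}\grad_{\mathrm{eucl}}\BG$ in isothermal coordinates, multiplication by the positive function $f$ does not change the trajectories, so the orbit structure of $\nabla_{g_0}\BG$ near $z$ coincides with that of the Euclidean gradient of $\Real(\ze^m)$. I would then argue that the stable set — the points whose $\om$-limit is $z$ — is exactly the union of the $m$ rays along which $\Real(\ze^m)$ decreases toward $0$, i.e.\ the directions of steepest descent into $z$; in the coordinate $\ze$ these are the rays $\arg\ze=(2k+1)\pi/m$, and together with the origin they form precisely a set homeomorphic to $\{\ze\in\CC:\ze^m\in[0,1)\}$, which is $m$ segments meeting at a point. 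The unstable set is the complementary family of $m$ rays (steepest ascent), giving the same homeomorphism type. This is where the \emph{well-defined tangent} claim comes out: each incoming trajectory limits onto one of these rays, so it approaches $z$ with a definite direction.

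The index computation then follows from the sectorial picture. Along a small circle around $z$ the vector field $\grad\Real(\ze^m)$ makes $m-1$ full clockwise turns (the gradient of $\Real(w^m)=r^m\cos m\theta$ has winding number $1-m$), so the Poincar\'e index of $z$ is $1-m$; since index is invariant under the orbital conjugacy~\eqref{conjug} and under multiplication by the positive factor $f$, the index of $z$ as a zero of $\nabla_{g_0}\BG$, and hence of $\nabla_gG$, is $1-m$.

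The step I expect to be the main obstacle is making the conformal normalization $F\mapsto \BG(z)+\Real(\ze^m)$ fully rigorous and matching the analytic parameter $m$ with the topological one. Concretely, one must verify that the order of vanishing of $F'$ equals $m-1$ (equivalently that $F(w)-F(0)$ has a zero of order exactly $m$), that the local biholomorphism flattening $F$ into its leading term exists and is smooth up to $z$, and — for removable singularities — that the extended $\BG$ is genuinely harmonic across $p_i$ so that this holomorphic description applies there as well. Once the normal form is secured, the homeomorphism type of the stable/unstable sets and the index are routine consequences of the model $\Real(\ze^m)$, so almost all of the real content is in establishing that model.
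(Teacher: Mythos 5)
Your proof is correct, but it takes a genuinely different route from the paper's. The paper works directly with the vector field: after writing $f\,\nabla_{g_0}\BG=\nabla h_m+O(|u|^m)$ with $h_m=C\Real[\e^{\I\al}(u_1+\I u_2)^m]$, it performs a polar blow-up, checks that the $2m$ blown-up singular points $(0,k\pi/m)$ are hyperbolic saddles, blows down to get exactly $2m$ hyperbolic sectors, and reads off the index from Bendixson's formula. You instead flatten the function itself: writing $\BG=\Real F$ for a holomorphic $F$ and conjugating by a biholomorphism to the exact model $\Real(\ze^m)$, then reading the separatrices and the winding number of $\overline{F'}$ off the model. Both are valid. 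Your approach buys an exact local normal form (so the separatrices and tangent directions are literally the rays of the model, with no perturbation to control), whereas the paper's blow-up is the more robust dynamical technique that handles the remainder $O(|u|^m)$ directly via hyperbolicity and would survive in settings where no conformal flattening is available. The step you flag as the main obstacle is in fact standard and closes in one line: since $\Real(a_mw^m)$ is a nonzero harmonic homogeneous polynomial whenever $a_m\neq0$, the order of vanishing of $F-F(0)$ equals the degree $m$ of the lowest Taylor term of $\BG-\BG(z)$; writing $F-F(0)=w^m\ms G(w)$ with $G(0)\neq0$ and choosing a local branch of $G^{1/m}$, the map $\ze=w\ms G(w)^{1/m}$ is the required biholomorphism, and the harmonicity of the extension across a removable singularity is exactly what Propositions~\ref{P.parends} and~\ref{P.hypends} provide. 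One cosmetic slip: the normal form should read $F(w)=F(0)+\ze^m$ (so $\BG=\BG(z)+\Real(\ze^m)$), since $F$ itself is holomorphic.
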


\begin{proof}
Let us take isothermal coordinates $u=(u_1,u_2)$ centered at $z$, in
which the metric reads as
\[
g_0=f(u)\,\big(du_1^2+du_2^2\big)\,.
\]
Therefore, $\BG$ is a harmonic function of $u$ with respect to
the flat metric:
\begin{equation}\label{harm}
\frac{\pd^2\BG}{\pd u_1^2}+ \frac{\pd^2\BG}{\pd u_2^2}=0\,.
\end{equation}
It is therefore standard that $\BG$ is an analytic function of
$(u_1,u_2)$.

Let $h_m$ be the first nonzero homogenous polynomial that appears in
the Taylor expansion of $\BG$ in these coordinates at $0$, so that
(with a slight abuse of notation)
\begin{subequations}\label{new}
\begin{align}
\BG(u)-\BG(0)&=h_m(u)+O(|u|^{m+1})\,,\\
f(u)\, \nabla_{g_0}\BG(u)&=\nabla h_m(u)+O(|u|^m)\,,\label{eq2b}
\end{align}
\end{subequations}
Here we are denoting by $\nabla$ the flat space gradient with respect
to the coordinates~$u$.

By Eq.~\eqref{harm}, the homogeneous polynomial $h_m$ is a harmonic
function (with respect to the flat space Laplacian in the coordinates
$u$), so it must be of the form
\[
h_m=C\Real \big[\e^{\I \al}(u_1+\I u_2)^m\big]
\]
for some real constants $C,\al$. In particular, $0$ is an isolated
critical point of $h_m$, which readily implies that $z$ is an isolated
zero of $\nabla_{g_0}\BG$.

Let us now consider polar coordinates
$(r,\theta)\in(0,\ep)\times\SS^1$ defined by
$(u_1,u_2)=(r\,\cos\theta,r\,\sin\theta)$. In these coordinates one
has
\[
h_m(r,\theta)=C\,r^m\cos (m\ms \theta+\al)\,.
\]
There is no loss of generality in
setting $\al=0$.
We define the polar blow up of the gradient $\nabla_{g_0}\BG$
at $z$ using polar coordinates as the vector field
\[
X:=\frac{f}{Cm\,r^{m-2}}\,\nabla_{g_0}\BG= \frac1{Cm\,r^{m-2}}\,\big(\nabla
h_m+O(r^m)\big)\,,
\]
where we have used Eq.~\eqref{eq2b}. The blown-up trajectories are then given by
\begin{subequations}\label{dotrte}
\begin{align}
\dot r&=r\,\cos m\theta+O(r^2)\,,\\
\dot\theta&=-\sin m\theta+O(r)\,.
\end{align}
\end{subequations}
The blown-up critical points are thus $(0,\theta_k)$,
with $\theta_k:=k\pi/m$ and $k=1,\dots, 2m$. The Jacobian matrix
of $X$ at $(0,\theta_k)$ is
\begin{equation}\label{Jac}
DX(0,\theta_k)=\left(%
\begin{array}{cc}
   (-1)^k& 0 \\
  0 & (-1)^{k+1} \\
\end{array}%
\right)\,,
\end{equation}
so these critical points are hyperbolic saddles. By blowing down, we
immediately find that a deleted neighborhood of $0$ consists exactly
of $2m$ hyperbolic sectors of the vector field $X$. 

Since the field $X$ is proportional to the gradient field $\nabla_{g_0}\BG$
through a factor that does not vanish but at $z$, this shows that
the intersection with a small neighborhood of $z$ with the its stable
or unstable set is homeomorphic to
\[
\big\{\zeta\in\CC:\zeta^m\in[0,1) \big\}\,,
\]
as claimed. Besides, the well known Bendixson formula for the index of a
planar vector field asserts
that the index of $z$ is 
\[
\ind(z)=1-\frac{\text{number of hyperbolic sectors}}2=1-m\,,
\]
as claimed.
\end{proof}

It is worth mentioning that the dynamics of the gradient of a harmonic
function in a neighborhood of a critical point in dimension higher
than $2$ is much more involved, as is discussed in~\cite{Go09}.

\section{The basin of attraction of the pole}
\label{S:basin}

In this section we will introduce the concept of basin of attraction
associated with $\nabla_{g_0}\BG$, as well as some
convenient compactifications thereof. We shall see how this object and
its boundary define a decomposition of the surface as the union of a
disk and a 1-skeleton, as outlined in the Heuristic Principle in the
Introduction. We shall also establish some properties
of these sets.

The basin of attraction of the pole $y$ is a key object in this paper,
and can be thought of as the set of points of the surface $M$ that approach $y$ when
flowed along the trajectories of the field $\nabla_gG$. However, in view of
the characterization of $M$ in terms of a compact surface $\Si$ (the
Uniformization Theorem~\ref{T.unif}), it is
slightly more convenient to study the basin of attraction directly in
this compact surface, so instead we will use the following

\begin{definition}
The {\em basin of attraction}\/ is the set of points $D$ in $\cM$ whose $\om$-limit
along the trajectories of $\nabla_{g_0}\BG$ is $\By$:
\begin{equation}\label{eqD}
D:=\big\{x\in \cM: \om(x)=\By\big\}\,.
\end{equation}
\end{definition}
Of course, by the relationship between $\nabla_{g_0}\BG$ and
$\nabla_gG$, the diffeomorphism $\Phi:\cM\to M$ maps the basin $D$
onto the set of points in $M$ whose $\om$-limit along the integral
curves of $\nabla_gG$ is the pole $y$. It is easy to prove that $D$ is
diffeomorphic to a disk.

It is clear that both $D$ and its complement in $\cM$ are invariant sets under
the local flow of $\nabla_{g_0}\BG$. The complement,
\[
\cF:=\cM\minus D\,,
\]
will be a crucial object in the rest of the paper. 

In the following proposition we shall prove a general result about sets of $\cM$ that are
invariant under the local flow of $\nabla_{g_0}\BG$ from which it stems an
important property of $\cF$ as a corollary:
that $\cM\minus D$ has empty interior and thus $\cF$ coincides with
the boundary of the basin $D$ in $\cM$. Notice that, since the basin
of attraction is contractible, $\cF$ cannot be empty
unless the surface $M$ is diffeomorphic to $\RR^2$.


\begin{proposition}\label{P.interior}
  Let $S\subset \cM$ be an invariant set under the flow of $\nabla_{g_0}
  \BG$ that does not contain the point $\By$ and is relatively closed
  in $\cM$. Then the interior of $S$ is empty.
\end{proposition}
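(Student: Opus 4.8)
The plan is to argue by contradiction, turning a putative open piece of $S$ into an open set of initial conditions all of whose forward orbits must converge to saddle points, which is impossible. First I would set $W:=\inte S$ and suppose $W\neq\emptyset$. Since the time-$t$ maps of the local flow are open (local diffeomorphisms), $W$ is again invariant, and since $S$ is relatively closed in $\cM$ and does not contain $\By$, the point $\By$ has a neighborhood $U_0$ in $\cM$ disjoint from $S\supset W$. The basic engine of the proof is that $\BG$ is a (nonconstant) Lyapunov function: it is strictly increasing along every nonconstant trajectory of $\nabla_{g_0}\BG$, since $\tfrac{\dd}{\dd t}\BG=|\nabla_{g_0}\BG|^2_{g_0}>0$ off the zero set. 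Because $+\infty$ is attained by $\BG$ only at the pole, while at the ends $\BG$ is either finite (removable singularities), tends to $-\infty$ (nonremovable parabolic ends, Proposition~\ref{P.parends}), or tends to $0$ at the hyperbolic boundary circles (Proposition~\ref{P.hypends}), the quantity $c:=\sup_{\cM\minus U_0}\BG$ is finite, so $\BG\leq c$ on $W$.

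The main work, and the step I expect to be the real obstacle because of the noncompactness of $\cM$, is to trap forward orbits in a compact region. I would fix $x_0\in W$, choose a small open $O\subset W$ with $x_0\in O$, and set $a:=\inf_O\BG$. I claim that the forward orbit of every $x\in O$ stays inside $K:=\{a\leq\BG\leq c\}$, viewed in the surface $\hat\cM$ obtained by filling in the removable singularities and adjoining the hyperbolic boundary circles, where $K$ is compact. Monotonicity forbids escape to the nonremovable parabolic ends (there $\BG\to-\infty<a$); the pole is excluded since $\BG\leq c$; and escape to a hyperbolic end is ruled out because on each boundary circle $\BG$ attains its minimum value $0<\BG(x)$ and $\nabla_{g_0}\BG$ points inward (Hopf's lemma gives a nonvanishing inward normal derivative), so increasing $\BG$ drives the orbit away from the boundary. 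Removable parabolic ends are interior points of $\hat\cM$ after the extension, so they pose no difficulty. Hence each forward orbit issuing from $O$ remains in the compact set $K$.

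It then remains to extract the contradiction from the local picture at the zeros. On the compact set $K$ the critical points of $\BG$ are isolated (Proposition~\ref{L:halfb}), hence finite in number, so by the standard gradient-flow convergence argument (monotone $\BG$, isolated equilibria, connected $\om$-limit) the forward orbit of each $x\in O$ converges to a single critical point lying in $K$. None of these is the pole, since $K$ avoids $U_0$; and by Proposition~\ref{L:halfb} every such zero $z$ has index $1-m\leq-1$, so its stable set is, near $z$, homeomorphic to $\{\ze\in\CC:\ze^m\in[0,1)\}$, a union of $m$ arcs, and globally a finite union of trajectories, i.e.\ a one-dimensional, nowhere dense subset of $\cM$. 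Therefore $O$ is contained in a finite union of nowhere dense stable sets, contradicting the fact that $O$ is a nonempty open set. This forces $W=\inte S=\emptyset$, as claimed.
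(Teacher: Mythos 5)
Your proof is correct, but it reaches the conclusion by a genuinely different mechanism than the paper does. The paper's argument is local and potential-theoretic: since $\De_{g_0}\BG=0$ on a component $U$ of the interior of $S$, the maximum principle forces the maximum of $\BG$ over $\overline U$ to be attained at a boundary point $z$, which by invariance of $\pd U$ must be a critical point; the local flow then carries a subset $B_{g_0}(z,\de)\cap\overline U$ of positive area onto the single point $z$, contradicting the fact that $\nabla_{g_0}\BG$ is divergence-free (harmonicity again) and hence its flow is area-preserving. You instead run a global dynamical argument: trap all forward orbits issuing from an open subset $O$ of $S$ inside a compact band $\{a\leq\BG\leq c\}$, conclude from Proposition~\ref{L:halfb} that each such orbit converges to one of finitely many saddle-type zeros, and observe that the stable sets of these zeros are finite unions of trajectories, hence nowhere dense (indeed of measure zero) and unable to cover the open set $O$. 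Both routes rest on Proposition~\ref{L:halfb}, but the paper uses harmonicity twice (maximum principle and incompressibility) and never has to control orbits away from a single boundary maximum, whereas your proof uses harmonicity only through the local normal form and would survive for any gradient-like field whose nontrivial equilibria have one-dimensional stable sets --- at the cost of the trapping step, which is precisely where the noncompactness of $\cM$ must be confronted; your scheme is in fact close in spirit to the paper's later sketch of Theorem~\ref{T.main2} for $M\cong\RR^2$. Two details deserve tightening: choose $O$ with compact closure in $\cM$ so that $a=\min_{\overline{O}}\BG$ is strictly positive in the hyperbolic case (otherwise $\{a\leq\BG\leq c\}$ need not avoid the boundary circles), and explicitly discard the finitely many backward orbits of the removable singularities, since an orbit reaching a puncture in finite time leaves $\cM$ and the invariance of $S$ no longer governs its continuation in the filled-in surface; both exceptional sets are unions of finitely many curves, so they do not disturb the final covering contradiction.
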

\begin{proof}
  In this proof, we will assume that we have enlarged the set
  $\cM$ and extended the function $\BG$ in the obvious way so that the
  removable singularities are points contained in $\cM$ and $\BG$ is
  also defined at these points. Let $U$
  denote a connected component of the interior of $S$. By
  Propositions~\ref{P.parends} and~\ref{P.hypends}, the disks $D_i$ or
  the deleted points $p_i$ that are not removable singularities behave
  as local minima of the function $\BG$. Since $\De_{g_0}\BG=0$ both
  in $U$ and in a neighborhood of any removable singularity $p_i$, the
  maximum principle for harmonic functions ensures that the maximum of
  $\BG$ must be attained at a point $z$ of $\pd U$ (possibly a removable singularity).

  Since the boundary of $U$ is invariant, $z$ must be a critical point
  of $\BG$, which is necessarily isolated by
  Proposition~\ref{L:halfb}. As $\BG$ is smooth in a neighborhood of
  $z$, $z$ is an isolated maximum of $\BG$ in the closure $\overline U$ and
  $\BG$ is increasing along the local flow $\psi_t$ of
  $\nabla_{g_0}\BG$, it follows that for any $\ep>0$ there exists some
  $\de>0$ such that
\[
\psi_t\big( B_{g_0}(z,\de)\cap \overline U\big)\subset B_{g_0}(z,\ep)
\]
for all $t>0$ and
\[
\bigcap_{t\geq0} \psi_t\big( B_{g_0}(z,\de)\cap \overline U\big)=\big\{z\big\}\,.
\]
Hence there exists a region $B_{g_0}(z,\de)\cap \overline U$ of
nonzero measure whose $\om$-limit is $z$. This contradicts the fact
that, since $\De_{g_0}\BG=0$ in a neighborhood of~$z$, the local flow
of $\nabla_{g_0}\BG$ is area-preserving, so the set $U$ must be empty.
\end{proof}

Hence we immediately obtain the desired statement about $\cF$:

\begin{corollary}
The complement of $D$ in $\cM$ coincides with the boundary of $D$
in $\cM$, that is, 
\[
\cF=\cM\minus D=\pd D\,.
\]
\end{corollary}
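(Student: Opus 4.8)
The plan is to deduce the corollary directly from Proposition~\ref{P.interior} by applying it to the set $S=\cF=\cM\minus D$, after checking that $\cF$ fulfils the three hypotheses of that proposition: invariance, not containing $\By$, and being relatively closed in $\cM$.

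First I would verify these hypotheses. Invariance is immediate, since $D$ is invariant under the local flow of $\nabla_{g_0}\BG$ (being defined as a set of points with prescribed $\om$-limit), and the complement of an invariant set is invariant. The point $\By$ lies in $D$ because its own $\om$-limit is itself, so $\By\notin\cF$. The only hypothesis requiring a genuine argument is that $\cF$ is relatively closed in $\cM$, equivalently that $D$ is open. For this I would invoke Proposition~\ref{L.pole}: after multiplying by the positive factor $\rho$, which vanishes only at $\By$ and therefore changes neither the trajectories nor their $\om$-limits, the field $\nabla_{g_0}\BG$ has a stable node at $\By$ with linearization $-x_1\,\pd/\pd x_1-x_2\,\pd/\pd x_2$. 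Thus $\By$ is an asymptotically stable equilibrium, and it is standard that the basin of attraction of such an equilibrium is open. Consequently $D=\inte D$ and $\cF$ is relatively closed.

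With the hypotheses in place, Proposition~\ref{P.interior} yields that $\cF$ has empty interior, which is exactly the statement that $D=\cM\minus\cF$ is dense in $\cM$, so $\overline{D}=\cM$. Combining this with $D=\inte D$ established above gives
\[
\pd D=\overline{D}\minus\inte D=\cM\minus D=\cF\,,
\]
as asserted.

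The only nonroutine point is the openness of $D$, and even this is a direct consequence of the stable-node structure at the pole recorded in Proposition~\ref{L.pole}; everything else is set-theoretic bookkeeping built on top of Proposition~\ref{P.interior}. I therefore do not expect a genuine obstacle: the corollary is essentially a repackaging of the empty-interior statement together with the asymptotic stability of the pole.
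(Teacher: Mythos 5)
Your proof is correct and follows exactly the route the paper intends: the corollary is stated as an immediate consequence of Proposition~\ref{P.interior} applied to $S=\cM\minus D$, and you have simply supplied the routine verifications (invariance, $\By\in D$, and openness of $D$ via the stable node at the pole from Proposition~\ref{L.pole}) that the paper leaves implicit.
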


Our goal now is to derive further properties of the set $\cF$.  For
technical reasons, it is more convenient to consider the flow of an
auxiliary vector field $X$ defined in the whole compact surface $\Si$
rather than that of $\nabla_{g_0}\BG$, which is only defined on $\cM$. For this, let us take a point
$q_i$ belonging to the interior of each disk $D_i$. Since the disk $D_i$ retracts into
$q_i$, one can take a diffeomorphism
\[
\Psi:\cM\to \Si\minus\{p_1,\dots,p_{\la_1},q_1,\dots,q_{\la_2}\}
\]
which is equal to the identity outside a small neighborhood of the closed
disks ${D_i}$ (in particular, at $\By$). 

Let us relabel the parabolic
ends if necessary so that $\{p_i\}_{i=1}^{\la_1'}$ are the
non-removable singularities, with $0\leq \la_1'\leq\la_1$. The auxiliary field is then defined as
\[
X:=F\,\Psi_*\big(\nabla_{g_0}\BG\big)\,,
\]
where $F:\Si\to\RR$ is a smooth nonnegative function that only
vanishes at $\{\By\}\cup \cP$, with
\begin{equation}\label{points}
\cP:=\big\{p_1,\dots, p_{\la_1'},q_1,\dots, q_{\la_2}\big\}
\end{equation}
and is chosen so that $X$ can be smoothly extended to the whole
surface $\Si$. It is standard that such a factor always exists;
notice, moreover, that we do not need to impose any additional
conditions on this factor to ensure that $X$ is also defined at the
removable singularities (we only need to consider the obvious
extension of $\BG$). Throughout this paper, we will denote the
flow of $X$ by $\phi_t$.

It is clear that
the function $\BG\circ \Psi^{-1}$ can then be extended to a continuous
function $\hG:\Si\to[-\infty,+\infty]$ by setting
\[
\hG(\By):=+\infty\,,\qquad \hG(p_i):=-\infty\,,\qquad \hG(q_j):=0
\]
for $1\leq i\leq \la_1'$ and $1\leq j\leq \la_2$. Hence it stems that
the field $X$ is gradient-like: the Lie derivative $\cL_X\hG$ is
strictly positive but at the zeros of the field $X$, which are the
only points at which the function $\hG$ can fail to be smooth. Moreover, the
zeros of $X$ are exactly the images under the diffeomorphism $\Psi$ of
the zeros of $\nabla_{g_0}\BG$ (including those that correspond to
removable singularities) and $\{y\}\cup\cP$. Notice that the
points in $\cP$ are precisely the global isolated minima of $\hG$: indeed,
by Proposition~\ref{P.hypends} if $\la_2\geq 1$, then $\BG$ is
positive and $\la_1'=0$. Besides, from this proposition it stems that
the cardinality of $\cP$ is
\begin{equation}\label{lap}
\la':=\begin{cases}
\la_1'&\text{if } \la_2=0,\\
\la_2 &\text{if } \la_2\geq1.
\end{cases}
\end{equation}

The reason why we consider the field $X$ is that, instead of directly
analyzing the sets $D$ and $\cF$ associated with the field $\nabla_{g_0}\BG$, it is easier
to consider the analogous dynamical objects for the field $X$. That
is, we will consider the set $\hD$ of the points in $\Si$  whose $\om$-limit along the flow of $X$ is
$\By$, which we will still call the {\em basin of attraction}\/ of the
field $X$. Its associated {\em basin boundary}\/  is then defined as
\[
\hF:=\Si\minus\hD\,.
\]
Just as in the case of the set~$D$, it is standard that $\hD$ is diffeomorphic to a
disk. 

\begin{remark}\label{R.X}
Proposition~\ref{P.interior} and its proof are also valid, mutatis
mutandis, for the field $X$. That is, if $S\subset \Si$ is a closed
invariant set under the flow of $X$ that does not contain the point
$\By$, it has empty interior. In particular, 
\[
\hF=\Si\minus\hD=\pd\hD\,.
\]
\end{remark}

The following proposition completely characterizes the set $\hF$ in
terms of the zeros of the field $X$ and their stable components. To
state this result, let us introduce the notation
\begin{equation}\label{cZ}
\cC:=\Psi\big[\big\{ z\in\cM\cup\{p_{\la_1'+1},\dots,p_{\la_1}\}: \nabla_{g_0}
\BG(z)=0\big\}\big]
\end{equation}
for the image under the diffeomorphism $\Psi$ of the critical points
of $\BG$, including those that may correspond to a removable singularity.

\begin{proposition}\label{P.hF}
$\hF$ is the union of
  the zeros of the field $X$ other than $\By$ and the stable sets of
  the zeros in the set $\cC$:
\[
\hF=\cP\cup \bigcup_{z\in\cC} W^s(z)\,.
\]
Furthermore, $\hF$ is connected. 
\end{proposition}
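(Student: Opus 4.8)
The plan is to prove the two assertions of Proposition~\ref{P.hF} separately: first the set-theoretic characterization $\hF=\cP\cup\bigcup_{z\in\cC}W^s(z)$, and then the connectedness of $\hF$.

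For the characterization, I would argue by establishing both inclusions using the gradient-like structure furnished by $\hG$. Since $X$ is gradient-like with $\cL_X\hG>0$ away from the zeros of $X$, the $\om$-limit of any point must be a single zero of $X$ (the Lie derivative being strictly positive forces trajectories to increase $\hG$ monotonically, and on a compact surface the $\om$-limit of a gradient-like flow is a connected set of zeros, hence a single point since zeros are isolated by Proposition~\ref{L:halfb}). Thus every point of $\Si$ either flows to $\By$ — in which case it lies in $\hD$ — or its $\om$-limit is some other zero $z$, in which case it lies in $W^s(z)$. Now the zeros of $X$ other than $\By$ are precisely $\cP$ together with the critical points in $\cC$ (this is recorded in the paragraph defining $X$, where the zeros are identified as the images of the zeros of $\nabla_{g_0}\BG$ together with $\{\By\}\cup\cP$). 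The points of $\cP$ are global minima of $\hG$, so they cannot be approached from above along increasing trajectories except trivially; they belong to $\hF$ as zeros but have trivial stable sets in the relevant sense. This gives $\hF=\Si\minus\hD\subset \cP\cup\bigcup_{z\in\cC}W^s(z)$. The reverse inclusion is immediate: each $z\in\cC$ is a zero other than $\By$, so $z\in\hF$ and, since $\hF$ is invariant and closed (Remark~\ref{R.X}), the whole stable set $W^s(z)\subset\hF$; likewise $\cP\subset\hF$.

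For connectedness, the key tool is that $\hF=\pd\hD$ by Remark~\ref{R.X} and that $\hD$ is diffeomorphic to an open disk. I would invoke the fact that $\hD$ is an open, connected, simply connected subset of the closed surface $\Si$ whose complement is $\hF$. The plan is to use the structure of the flow near the boundary together with a topological argument: because $\hD$ is a disk and $X$ points ``inward'' toward $\By$ on $\hD$, the boundary $\pd\hD$ is the image of the boundary circle of the disk under the natural identification, hence connected. More carefully, I would compactify or use the gradient-like function $\hG$ to see that $\overline{\hD}$ is obtained from the open disk $\hD$ by collapsing, so that $\pd\hD=\hF$ inherits connectedness from the boundary circle $\SS^1=\pd\DD$ under the continuous extension of the parametrizing map. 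Alternatively, one argues that if $\hF$ were disconnected, then $\Si\minus\hF=\hD$ would have to be disconnected or fail to be simply connected, contradicting that $\hD$ is a disk.

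The main obstacle I anticipate is the connectedness argument, not the set-theoretic characterization. The subtlety is that $\hD$ being an abstract open disk does not immediately force its frontier $\pd\hD$ in $\Si$ to be connected — the boundary of an embedded open disk in a surface can in principle be complicated, since the embedding need not extend continuously to the closed disk. The honest difficulty is controlling the approach of $\partial\hD$: one must rule out that distinct arcs of the boundary circle limit onto disjoint pieces of $\Si$. I expect the resolution to use that $X$ is gradient-like and that $\By$ is a node (Proposition~\ref{L.pole}), so that the flow radially organizes $\hD$ around $\By$ and the boundary is swept out connectedly by following trajectories backward from a small circle around $\By$; the compactness of $\Si$ and the absence of recurrence for a gradient-like flow should then close the argument, but making the backward-limit map continuous up to the boundary is where the real care is needed.
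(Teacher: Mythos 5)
Your proof of the set-theoretic identity is essentially the paper's: both rest on the gradient-like property forcing the $\om$-limit of every nonstationary point to be a single zero of $X$, and on excluding $\cP$ as a possible $\om$-limit because its points are minima of $\hG$ while $\hG$ strictly increases along nonconstant trajectories, so that any point outside $\hD$ must lie in $W^s(z)$ for some $z\in\cC$ or be a zero itself. For the connectedness the paper is even terser than you are, and your second argument (a disconnected complement would contradict $\hD$ being a connected, simply connected open subset of the closed surface $\Si$) is the right one; the boundary-parametrization difficulty you worry about never arises, since one can instead write $\hF=\bigcap_{t\leq 0}\big(\Si\setminus\phi_t(B)\big)$ as a nested intersection of compact connected sets, exactly as the paper does later in the proof of Theorem~\ref{P.hF2}.
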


\begin{proof}
Remark~\ref{R.X} readily implies that $\hF$ is connected and has empty
interior. Let us now set
\[
W:=\cP\cup \bigcup_{z\in\cC} W^s(z)\,.
\]
By definition, it is clear that the $\om$-limit of any point $x\in W$
cannot be the point $\By$, so $W\subset \Si\minus\hD$.

Let us now prove the converse implication: $\Si\minus \hD\subset
W$. For this, we shall show that the
$\om$-limit of any $x\in \Si\minus \hD$ must be a zero of the field
$X$ different from $\By$. We can obviously suppose that $x$ is not a zero of
$X$. Since $X$ is gradient-like, $\phi_tx$ must tend to a zero $z$ of
$X$ as $t\to\infty$~\cite{MP82}. Besides, $z$ must belong to the set $\cC$ because the points in
$\cP$ are minima of $\hG$ and the Lie derivative
\[
\cL_X\hG(\phi_tx)=\frac d{dt}\hG(\phi_tx)
\]
is positive if $x$ is not a zero of $X$. Hence we infer that $\hF=W$.
\end{proof}

It is clear that a good description of $\hF$
immediately yields a complete characterization of the original set
$\cF$. In particular, Proposition~\ref{P.hF} easily implies that,
roughly speaking, $\cF$
consists of the set $\hF$ and some additional segments  that connect a
point in $\hF$ with a parabolic end of the surface. From this set, of
course, we still have to remove the points in $\hF$ corresponding to
an end. This is the content of the following

\begin{corollary}\label{C.cF}
The set $\cF$ is the image under the diffeomorphism $\Psi^{-1}$ of the set
\[
\bigg(\hF\cup
\bigcup_{i=\la_1'+1}^{\la_1}\big\{\phi_{-t}p_i:t>0\big\}\bigg)\Big\minus
\big\{p_1,\dots, p_{\la_1},q_1,\dots, q_{\la_2}\}\,.
\]
The closure of $\cF$ in $\Si$ is connected.
\end{corollary}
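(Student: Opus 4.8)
The plan is to leverage Proposition~\ref{P.hF} directly, translating its conclusion about $\hF$ back through the diffeomorphism $\Psi$. Recall that $\Psi:\cM\to\Si\minus\{p_1,\dots,p_{\la_1},q_1,\dots,q_{\la_2}\}$ is equal to the identity away from neighborhoods of the disks $D_i$, and that $X=F\,\Psi_*(\nabla_{g_0}\BG)$ is merely a reparametrization of the pushed-forward gradient field by a nonvanishing positive factor (except at $\{\By\}\cup\cP$). Consequently, on the open set $\Si\minus\{p_1,\dots,p_{\la_1},q_1,\dots,q_{\la_2}\}$ the flows $\phi_t$ and $\Psi_*(\nabla_{g_0}\BG)$ share the same oriented trajectories, and therefore the same $\om$-limit and $\al$-limit sets. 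The first task is thus to identify $\cF=\cM\minus D$, as a subset of $\cM$, with the $\Psi$-preimage of an explicit subset of $\Si$.

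The key step is to account for the difference between the basin $D\subset\cM$ defined via $\om$-limits of $\nabla_{g_0}\BG$ and the basin $\hD\subset\Si$ defined via $\om$-limits of $X$. These differ precisely at the non-removable parabolic ends $p_{\la_1'+1},\dots,p_{\la_1}$: for the field $X$ on $\Si$, such a point $p_i$ is a zero (a global minimum of $\hG$), so its unstable set $\{\phi_{-t}p_i:t>0\}$ consists of trajectories flowing \emph{out} of $p_i$; but these same trajectories, viewed in $\cM$ for the field $\nabla_{g_0}\BG$, are honest trajectories that must still have $\om$-limit $\By$ unless they lie in $\hF$, and hence they generically belong to $D$, \emph{not} to $\cF$. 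So I would argue as follows. By Proposition~\ref{P.hF}, $\hF=\cP\cup\bigcup_{z\in\cC}W^s(z)$. The set $\cF\subset\cM$ is the set of points whose $\nabla_{g_0}\BG$-trajectory does not have $\om$-limit $\By$. A point $x\in\cM$ fails to be in $D$ exactly when its forward orbit limits onto a critical point in $\cC$, i.e.\ when $\Psi(x)\in\bigcup_{z\in\cC}W^s(z)\subset\hF$, \emph{or} when its forward orbit escapes to a non-removable parabolic end $p_i$ with $i>\la_1'$; the latter trajectories are precisely the images under $\Psi^{-1}$ of the unstable trajectories $\{\phi_{-t}p_i:t>0\}$. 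This explains the union $\hF\cup\bigcup_{i=\la_1'+1}^{\la_1}\{\phi_{-t}p_i:t>0\}$ appearing in the statement. Finally, since $\cF$ lives in $\cM$ while these objects live in $\Si$, one must delete the points $\{p_1,\dots,p_{\la_1},q_1,\dots,q_{\la_2}\}$ that are not in $\cM$, which accounts for the subtraction; the removed $q_j$ and removable $p_i$ lie in $\hF\cap\cP$ or get reintroduced only upon taking closure.

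For the connectedness of the closure of $\cF$ in $\Si$, the plan is to reduce to the connectedness of $\hF$ already established in Proposition~\ref{P.hF}. The set whose $\Psi^{-1}$-image is $\cF$ differs from $\hF$ only by adjoining the unstable trajectory arcs $\{\phi_{-t}p_i:t>0\}$ for $\la_1'<i\le\la_1$ and by deleting finitely many points. Each such added arc is itself a connected set (the continuous image of $(0,\infty)$ under the flow), and its closure contains the end $p_i$ and, on the other side, its $\al$-limit, which must be a zero of $X$ in $\overline{\hF}$—indeed, since $X$ is gradient-like with $\hG$ increasing along orbits, the backward orbit $\phi_{-t}p_i$ tends as $t\to\infty$ to a zero $z$ of $X$ with $\hG(z)<\hG(p_i)$; but $p_i$ is a global minimum of $\hG$, forcing the $\al$-limit to be another zero that already lies in $\hF$. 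Thus taking closure glues each added arc onto the connected set $\hF$ at a common point, and a finite union of connected sets each meeting the connected set $\overline{\hF}$ is connected. Deleting finitely many points from $\cF$ does not affect connectedness of its \emph{closure} in $\Si$, since the deleted points are limit points that get restored.

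The main obstacle I anticipate is the careful bookkeeping of the $\al$-limit of the escaping trajectories $\{\phi_{-t}p_i:t>0\}$ and verifying it lands in $\overline{\hF}$: one must rule out the degenerate possibility that the backward orbit of $p_i$ fails to converge to a single critical point, and confirm that the point it approaches is genuinely a critical point in $\cC$ (hence in $\hF$) rather than something that would leave the closure disconnected. This is controlled by the gradient-like structure (Lie derivative $\cL_X\hG>0$ off the zeros, as established in the discussion preceding Proposition~\ref{P.hF}) together with the convergence result for gradient-like flows~\cite{MP82}, exactly as in the proof of Proposition~\ref{P.hF}. Everything else is a matter of translating limit sets through the orbit-preserving, identity-near-$\By$ diffeomorphism $\Psi$ and tracking which finitely many points of $\Si$ do or do not belong to $\cM$.
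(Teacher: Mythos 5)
Your overall strategy is the same as the paper's: translate Proposition~\ref{P.hF} through $\Psi$, identify the discrepancy between $\hD$ and $D$ with the orbits that hit a puncture, and obtain connectedness of $\overline{\cF}$ by gluing finitely many invariant arcs onto the connected set $\hF$. However, the execution rests on a misdescription of the points $p_{\la_1'+1},\dots,p_{\la_1}$ that makes two steps incoherent. These indices label the \emph{removable} singularities (the paper relabels so that $p_1,\dots,p_{\la_1'}$ are the non-removable ones); they do not belong to $\cP$, the factor $F$ does not vanish there, $\hG$ extends to a finite value there, and generically $X(p_i)\neq 0$. You instead call them non-removable, declare them zeros of $X$ and global minima of $\hG$, and refer to $\{\phi_{-t}p_i:t>0\}$ as their ``unstable set''. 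If $p_i$ were a zero of $X$, the set $\{\phi_{-t}p_i:t>0\}$ would collapse to $\{p_i\}$ and the correction term in the formula would be empty; the whole point is that the $X$-orbit passes \emph{through} the puncture $p_i$, so its backward arc lies in $\hD$ (its $\om$-limit under $X$ is still $\By$) yet corresponds in $\cM$ to an orbit that dies at the end and therefore lies in $\cF$. The clean justification that these are the \emph{only} points of $\hD$ failing to map into $D$ is the one the paper gives: an orbit can only pass through a puncture that is not a zero of $X$, and every puncture other than a removable singularity is a zero of $X$.

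The same confusion infects the connectedness argument: you assert that the backward orbit of $p_i$ tends to a zero $z$ with $\hG(z)<\hG(p_i)$ ``but $p_i$ is a global minimum of $\hG$,'' which is a contradiction rather than a proof. The correct statement is that $\hG(p_i)$ is finite and non-minimal, and the $\al$-limit of the backward orbit is a zero of $X$ that cannot be $\By$ (a sink, by Proposition~\ref{L.pole}), hence lies in $\cC\cup\cP\subset\hF$; this is precisely how the paper attaches each added segment to the connected set $\hF$. Once the roles of removable and non-removable ends are corrected, your argument goes through and coincides with the paper's proof.
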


\begin{proof}
It is clear that the image under $\Psi^{-1}$ of the set $\hF$ minus
the ends 
\[
\big\{p_1,\dots, p_{\la_1}, q_1,\dots, q_{\la_2}\big\}
\]
must be
contained in $\cF$. Likewise, the image under $\Psi^{-1}$ of any point $x$
in $\hD$ will be
in the basin of attraction $D$ unless at some positive time $t$ the trajectory $\phi_tx$ passes
through an end, which is necessarily a removable singularity since the
other ends are all zeros of the field $X$. That is, for some $t>0$
one would have
\[
x=\phi_{-t}p_i\qquad\text{for some }{\la_1'}+1\leq i \leq
\la_1\,.
\]
This proves the formula in the statement. (Of course, among the
removable singularities it would be enough to consider those that are
not a zero of the field~$X$.)

Therefore, the closure of $\cF$ in $\Si$ is either empty (then $\hF$
consists of a single point) or is diffeomorphic to the union of $\hF$
and a finite number of segments whose endpoints are a removable
singularity $p_i$ as above and a zero of the field $X$ (which obviously
belongs to $\hF$). Therefore, the closure of $\cF$ is connected. 
\end{proof}

\section{Structure of the basin boundary and bounds for the critical points}
\label{S:bounds}

In this section we will provide a full characterization of the basin
boundary, thereby obtaining an upper bound for the number of
critical points of the Green's function. The characterization of the
basin boundary lay bare a strong connection between the dynamics of
the field $X$ (or, equivalently, $\nabla_gG$) and the topology and
conformal structure of the surface. 

The following theorem provides an upper bound for the number of
critical points of $\BG$ (including those corresponding to removable
singularities) in terms of the conformal properties of the
surface, which appear through the number $\la'$ introduced in
Eq.~\eqref{lap}. A straightforward consequence of this result is the
purely topological bound presented in Theorem~\ref{T:main}, which
differs from the present statement in that here we are using additional information on
the conformal structure of the surface $\Si$ to sharpen the upper bound:

\begin{theorem}\label{T.main2} 
The number of zeros of the field $\nabla_{g_0}\BG$ in 
\[
\cM\cup\big\{p_{\la_1'+1},\dots,p_{\la_1}\big\}\,,
\]
that is, the cardinal of the set $\cC$, is not
larger than $2\nu+\la'-1$, and if this upper bound is attained then $\BG$ is Morse.  
\end{theorem}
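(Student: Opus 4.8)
The plan is to apply the Poincaré--Hopf index theorem to the vector field $X$, which by construction is a smooth field with isolated zeros on the compact surface $\Si$, whose Euler characteristic is $\chi(\Si)=2-2\nu$. Indeed, all zeros of $X$ are isolated: the pole $\By$ is a node by Proposition~\ref{L.pole}, the points of $\cC$ are isolated by Proposition~\ref{L:halfb}, and the points of $\cP$ are isolated minima of $\hG$; since $\Si$ is compact, there are only finitely many of them. Hence the index theorem gives $\sum_z \ind(z)=2-2\nu$, the sum ranging over all zeros of $X$.

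The next step is to tally these zeros together with their indices. As was established just before the statement, the zeros of $X$ are exactly $\By$, the $\la'$ points of $\cP$, and the points of $\cC$. The pole $\By$ is the unique global maximum of the gradient-like function $\hG$, hence an isolated attractor of $X$, so $\ind(\By)=1$; likewise each of the $\la'$ points of $\cP$ is an isolated (possibly degenerate) minimum of $\hG$, hence an isolated repeller of $X$, and since reversing time preserves the index of a zero in the plane, each of these also contributes $+1$. Finally, for each $z\in\cC$ Proposition~\ref{L:halfb} gives $\ind(z)=1-m_z$ with $m_z\geq2$.

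Substituting these values into the index identity and writing $N:=\#\cC$, I would obtain
\[
1+\la'+\sum_{z\in\cC}(1-m_z)=2-2\nu,
\]
that is, $\sum_{z\in\cC} m_z = N+\la'+2\nu-1$. Since each $m_z\geq2$, the left-hand side is at least $2N$, which yields $N\leq 2\nu+\la'-1$, as claimed. Moreover, equality forces $m_z=2$ for every $z\in\cC$; but $m_z=2$ means that the lowest-order term of $\BG-\BG(z)$ is a nonzero harmonic quadratic, whose Hessian is nondegenerate, so $z$ is a nondegenerate critical point. As every critical point of $\BG$ lies in $\cC$, attaining the bound forces $\BG$ to be Morse.

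The arithmetic is routine once the setup is in place; the point requiring care is the inventory of zeros and their indices. In particular, one must confirm that the potentially degenerate minima in $\cP$ and the attracting point $\By$ each contribute index $+1$ (which rests only on their being isolated extrema of the gradient-like $\hG$, so that $X$ or $-X$ has an isolated asymptotically stable equilibrium there), and that no spurious zeros are introduced at the non-critical removable singularities, through which the flow of $X$ passes with $X\neq0$ since there $\nabla_{g_0}\BG\neq0$ and $F$ is nonvanishing. These are exactly the facts guaranteeing that the Poincaré--Hopf count involves only $\By$, $\cP$ and $\cC$.
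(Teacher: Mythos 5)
Your index computation reproduces the second half of the paper's argument essentially verbatim: Poincar\'e--Hopf for $X$ on $\Si$, index $+1$ at $\By$ and at each point of $\cP$, index $1-m_z\leq-1$ at each $z\in\cC$ by Proposition~\ref{L:halfb}, and the observation that saturating the bound forces $m_z=2$ for all $z$, hence $\BG$ Morse. The genuine gap is in the sentence where you dispose of finiteness: you claim all zeros of $X$ are isolated and conclude from compactness of $\Si$ that there are finitely many, justifying isolatedness at the points of $\cP$ by their being isolated minima of $\hG$. That inference does not hold. Proposition~\ref{L:halfb} only shows that the points of $\cC$ are isolated from one another inside $\cM$ (and at removable singularities); it says nothing about accumulation at a point $q\in\cP$, which corresponds to an end of $M$ and lies outside the domain where that proposition applies. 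A point can be a strict local minimum of $\hG$ while saddle-type zeros of $X$ pile up on it; in that case $q$ is not an isolated zero, Poincar\'e--Hopf does not apply in the form you invoke, and the sum $\sum_{z\in\cC}\ind(z)$ is not even finite. The paper explicitly flags this as the nontrivial point and devotes the entire first half of its proof (the ``Finiteness'' step, resting on Lemmas~\ref{L:curve} and~\ref{L:h}) to excluding it: if $\cC$ were infinite, one could concatenate stable curves of the accumulating zeros through two (possibly coincident) points of $\cP$ to build infinitely many invariant loops $\La_k$ avoiding $\By$; since no finite union of these can disconnect $\Si$ without contradicting Proposition~\ref{P.interior}, the classes $[\La_k]\in H_1(\Si;\ZZ)$ would be independent, which is impossible on a surface of finite type.

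To make your proof complete you must either supply an argument of this kind or a local substitute, for instance showing that $|\nabla_{g_0}\BG|$ is bounded away from zero near each non-removable end (via the Hopf boundary point lemma at a hyperbolic end $\pd D_j$, where the positive harmonic function $\BG$ vanishes on a smooth boundary, and via the blow-up of the gradient coming from the $\log$-singularity $\De_{g_0}\BG=c_i\de_{p_i}$, $c_i>0$, at a non-removable parabolic end). Either way, the finiteness and isolatedness of the zeros of $X$ is an input to the index count, not a consequence of compactness; everything after that point in your write-up is correct and coincides with the paper's argument.
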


Before presenting the proof of this result, it is illustrative to
sketch the argument in the easiest case: when $M$ is diffeomorphic to
$\RR^2$. In this case, the proof is similar to that of a result on the
absence of critical points in some boundary value problems in the
exterior of a bounded domain in~$\RR^n$
that we proved in~\cite{EP08}. Of course, this particular case is
elementary and could be easily treated using the uniformization theorem, but it serves to
illustrate the basic dynamical ideas underlying the proof of the
general situation.

\begin{proof}[Sketch of the proof when $M$ is diffeomorphic to $\RR^2$]
  Let us analyze what happens in $\cM$, which is diffeomorphic to the sphere minus a
  point $p$. Suppose that we have a zero $z$. By
  Proposition~\ref{L:halfb}, its stable set (with
  the point $z$ deleted) consists of at least two curves. The $\al$-limit
  of each of these curves cannot be $\By$ by Proposition~\ref{L.pole},
  so as the vector field $X$ is gradient-like either the curve approaches
  the end $p$ or its $\al$-limit is another zero $z_1$. We can now
  apply the same argument for the zero $z_1$, and if necessary to
  successive zeros $z_2,z_3,\dots$  Notice that $z_j\neq z_k$ for
  $j\neq k$, since otherwise we could have an invariant set (defined
  by the union of curves in the stable sets of zeros of the field)
  that is a Jordan curve not containing $\By$. Then this invariant set would separate the
  plane in two disjoint invariant sets with nonempty interior,
  contradicting Proposition~\ref{P.interior}.

  Since the zeros are isolated in $\cM$ by Proposition~\ref{L:halfb}, we
  can eventually take a union of curves in the stable sets of zeros
  $z_j$ of the field (possibly infinitely many, but only accumulating
  at $p$) whose closure in the sphere is a Jordan curve that
  contains the point $p$ but not $\By$. Again, this curve encloses an
  invariant set with nonempty interior that does not contain the point
  $\By$, in contradiction with Proposition~\ref{P.interior}. Hence
  there cannot be any zeros of $\nabla_{g_0}\BG$ and the theorem follows.
\end{proof}

In the general case, the proof is more involved and relies on a careful
analysis of the saddle connections between zeros of the field. In the
demonstration we need two lemmas that are presented right after
the proof and make use of the same notation. 

The proof is divided in two parts. First we show that the number of
zeros of $X$ is finite. Notice this is not trivial, since the zeros of
$X$ could accumulate at the set $\cP$, which is associated with
ends of the surface $M$, without contradicting the fact that critical
points of $\BG$ are isolated by Proposition~\ref{L:halfb}. However, we
show that if there were an infinite number of critical points, there
would be infinitely many closed invariant curves in $\Si$ defining
independent homology classes, which is forbidden by the fact that the
fundamental group of $\Si$ has finite rank. Roughly speaking, these closed invariant
curves are constructed by successive continuation of the stable sets of some
zeros of $X$. The second part of the proof consists in estimating the
number of zeros of $X$ using Hopf's index theorem and the
characterization of the dynamics of $X$ in a neighborhood of each zero.

\begin{proof}[Proof of Theorem~\ref{T.main2}]
Let us start by recalling that the set $\cP$, introduced in
Eq.~\eqref{points}, consists of precisely $\la'$ points, which are the
minima of the function $\hG$. As
$X$ is gradient-like, the $\al$- and $\om$-limit sets of any
trajectory of this field are necessarily a zero of $X$. Moreover, the
zeros of this field are obviously given by the set
\[
\cZ:=\{\By\}\cup \cC\cup \cP\,,
\]
where we record here that the set $\cC$, defined in~\eqref{cZ},
corresponds to the critical points of $\BG$ (possibly including
removable singularities) under the diffeomorphism $\Psi$.  The fact
that the critical points of $\BG$ are isolated (by
Proposition~\ref{L:halfb}) guarantees that $\cZ$ does not accumulate
but possibly at $\cP$.

Let $\gamma$ be a trajectory of the field $X$. This trajectory will be
called {\em constant} if it consists of a single point. We will use the notation $\al(\gamma)$
and $\om(\gamma)$ for the $\al$- and $\om$-limit sets of $\gamma$. 
Let us introduce a partial order on the set of zeros $\cZ$ as follows. Given two
points $x,x'\in \cZ$, we shall write $x\succ x'$ if, for any
open neighborhoods $U\ni x$ and $V\ni x'$ in $\Sigma$, there exist integers
$p\leq0$, $q\geq0$ and nonconstant trajectories (of the field $X$)
$\gamma_{p},\dots,\gamma_{q}$ such that
\begin{enumerate}
\item $\om(\gamma_{p})\in U$, $\al(\gamma_{q})\in V$,
\item $\al(\gamma_j)=\om(\gamma_{j+1})$ for $p\leq j\leq q-1$.
\end{enumerate}

\subsubsection*{Finiteness}
We claim that the set of zeros $\cZ$ is finite. In order to
prove this, let us assume the contrary. By Lemma~\ref{L:h} below, for each point
$x\in\cC$ there exists some point $p\in\cP$ such that
$x\succ p$. Since $\cP$ is finite, there exists some $p_1\in \cP$ such that one
can choose a sequence $(x_k)_{k=1}^\infty$ of distinct points in
$\cC$ with $x_k\succ p$. For each point $x_k$,
Lemma~\ref{L:curve} below yields a continuous path $\Gamma_{k,1}:[0,1]\to\Si$ whose image is
invariant under the field $X$ and satisfies $\Gamma_{k,1}(0)=x_k$ and
$\Gamma_{k,1}(1)=p_1$.

As a straightforward consequence of Proposition~\ref{L:halfb}, the invariant set
\[
W^s(x_k)\minus\Gamma_{k,1}([0,1])
\]
is nonempty. If we
let $\tilde x_k$ be the $\al$-limit of a trajectory contained in
this set, we obviously have $x_k\succ\tilde x_k$. By Lemma~\ref{L:h}, either $\tilde
x_k\in\cP$ or $\tilde x_k\succ p$ for some $p\in\cP$. Since $\cP$ is
finite, by Lemma~\ref{L:curve} and possibly upon restricting
ourselves to a subsequence that we still denote by $(x_k)_{k=1}^\infty$, we obtain a family of continuous
paths $\Gamma_{k,2}:[0,1]\to\Si$ whose image is invariant under $X$ and such
that $\Gamma_{k,2}(0)=x_k$ and $\Gamma_{k,2}(1)=p_2$ for some fixed $p_2\in\cP$ (possibly
the same as  $p_1$).

By construction, for each positive integer $k$ the connected set
\[
\Gamma_{k,1}([0,1])\cup \Gamma_{k,2}([0,1]) \cup
\Gamma_{k+1,1}([0,1]) \cup \Gamma_{k+1,2}([0,1])
\]
consists of a continuous curve that connects the points $x_k$ and
$x_{k+1}$ passing through $p_1$ and another continuous curve that
connects the same pair of points $x_k,x_{k+1}$ passing through
$p_2$. It is then evident that this set, which can have self-intersections, contains an invariant loop
(continuous closed curve) $\La_k$. One can obviously ensure that
$\La_k\neq\La_{k'}$ for $k\neq k'$ (these loops can intersect,
though), and that the point $\By$ does not belong to any $\La_k$.

For any integer $j$, the union of invariant loops 
\[
\bigcup_{k=1}^j\La_k
\]
cannot disconnect $\Si$,
since a connected component of $\Si\minus \bigcup_{k=1}^j\La_k$ that does not contain
$\By$ would be an invariant set with nonempty interior, contradicting
Proposition~\ref{P.interior}. Therefore, it is standard that the homology classes
$[\La_k]\in H_1(\Si;\ZZ)$ defined by the cycle $\La_k$ must be
independent for all $k=1,2,\dots$ This is impossible in a finitely
generated surface, so we infer that the set $\cC$ is finite.

\subsubsection*{Upper bound}
Let us now pass to bound the cardinality of the set $\cC$. Suppose
that the number $\la_2$ of removed disks is at least one, so that
\[
\cP=\big\{q_1,\dots, q_{\la_2}\}\,.
\]
A first observation is that the points $q_i$ are isolated zeros of the
field $X$, by the finiteness of $\cZ$, and are local repellers because
they correspond to minima of the function $\hG$. Therefore, the index
of $X$ at these points is
\[
\ind(q_i)=1\,.
\]
Similarly, the point $\By$ is an isolated zero which is a local
attractor, so it has index $1$. If we now apply Hopf's index theorem
to the vector field $X$ in $\Si$, we get that the sum of the indices of
the zeros of $X$ equals the Euler characteristic of
$\Si$:
\begin{equation}\label{eq1}
\ind(\By)+\sum_{z\in\cC} \ind(z)+\sum_{i=1}^{\la_2}\ind(q_i)=\chi(\Si)=2-2\nu\,.
\end{equation}
Since the
index of each point $z\in\cC$ is smaller than or equal to $-1$ by
Proposition~\ref{L:halfb}, plugging the values of the indices of $\By$
and $q_i$ we
find
\begin{equation}\label{nuevaeq}
\# \cC\leq -\sum_{z\in\cC} \ind(z)=2\nu+\la_2-1\,.
\end{equation}
The equality is not
satisfied but perhaps when $\ind(z)=-1$ for all $z\in\cC$, that is, when $\BG$ is
Morse (by Proposition~\ref{L:halfb}). This proves the theorem when $\la_2\geq1$.

Consider now the case where $\la_2=0$, so that 
\[
\cP=\big\{p_1,\dots,p_{\la_1'}\big\}\,.
\]
Arguing as before one easily finds that the index of $X$ at each point
$p_i$ is
\[
\ind(p_i)=1\,.
\]
We now apply Hopf's index theorem to the vector field
$X$ in $\Si$ to find
\begin{equation}\label{eq2}
\ind(\By)+\sum_{z\in\cC} \ind(z)+\sum_{i=1}^{\la_1'}\ind(p_i)=\chi(\Si)=2-2\nu\,,
\end{equation}
so that the same argument as above yields
\begin{equation}\label{nuevaeq2}
\#\cC\leq-\sum_{z\in\cP} \ind(z)=2\nu+\la_1'-1\,.
\end{equation}
Again, the inequality being saturated at most
when $\ind(z)=-1$ for all $z\in\cC$ (that is, when $\BG$ is
Morse). The theorem then follows.
\end{proof}

\begin{lemma}\label{L:curve}
Let $x,x'\in\cZ$ such that $x\succ x'$. Then there
exists a (not necessarily unique) injective continuous path
$\Gamma:[0,1]\to\Si$ such that:
\begin{enumerate}
\item $\Gamma(0)=x$ and $\Gamma(1)=x'$.

\item $\hG\circ\Gamma$ is strictly decreasing.

\item The curve $\Gamma([0,1])$ is invariant under the field $X$.
\end{enumerate}
\end{lemma}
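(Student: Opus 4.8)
The plan is to construct the path $\Gamma$ by concatenating the finitely many nonconstant trajectories guaranteed by the definition of the relation $x\succ x'$, together with the zeros at which they meet, and then verifying the three asserted properties. Recall that $x\succ x'$ means that for arbitrarily small neighborhoods $U\ni x$ and $V\ni x'$ there is a finite chain of nonconstant trajectories $\gamma_p,\dots,\gamma_q$ with $\om(\gamma_p)\in U$, $\al(\gamma_q)\in V$, and $\al(\gamma_j)=\om(\gamma_{j+1})$ for consecutive indices. The first step is to upgrade this ``for all neighborhoods'' chaining condition into a single \emph{true} saddle connection chain: since by Proposition~\ref{L:halfb} and the finiteness of $\cZ$ (established in the proof of Theorem~\ref{T.main2}) the zeros of $X$ are isolated, and since $X$ is gradient-like so every nonconstant trajectory has a zero as both its $\al$- and $\om$-limit, the limit sets $\om(\gamma_p)$ and $\al(\gamma_q)$ must actually \emph{equal} $x$ and $x'$ once $U$ and $V$ are taken small enough to isolate these zeros from the rest of $\cZ$. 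Thus I obtain a genuine sequence of zeros $x=z_0,z_1,\dots,z_N=x'$ and nonconstant trajectories $\gamma_j$ with $\om(\gamma_j)=z_{j-1}$ and $\al(\gamma_j)=z_j$; each such $\gamma_j$ lies in $W^s(z_{j-1})\cap W^u(z_j)$.

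Next I would assemble the curve. For each $j$, the closure $\overline{\gamma_j}$ is the trajectory together with its two endpoint zeros $z_{j-1}$ and $z_j$, and this closure is a continuous arc (a homeomorphic image of a closed interval) by the local normal form at the hyperbolic saddles described in Proposition~\ref{L:halfb}: near a saddle, a single trajectory enters along a well-defined tangent direction, so the trajectory together with its limiting zero is an embedded arc with no wild oscillation. Concatenating $\overline{\gamma_N},\overline{\gamma_{N-1}},\dots,\overline{\gamma_1}$ in order (noting that consecutive arcs share exactly the zero $z_j$) yields a connected, compact, invariant set joining $x$ to $x'$; reparametrizing on $[0,1]$ gives the continuous path. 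Property~(iii), invariance, is immediate since the image is a union of whole trajectories and zeros. For property~(ii), strict monotonicity, I use that $X$ is gradient-like: $\cL_X\hG=\frac{d}{dt}\hG(\phi_t x)>0$ off the zeros, so $\hG$ strictly increases along the flow direction of each $\gamma_j$, hence $\hG(z_j)<\hG(z_{j-1})$; traversing the concatenation from $x=z_0$ down to $x'=z_N$ makes $\hG\circ\Gamma$ strictly decreasing, with the values at the finitely many intermediate zeros strictly interlacing.

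The main obstacle is property~(i)'s requirement that $\Gamma$ be \emph{injective}. The raw concatenation can fail to be injective for two reasons: the chain of zeros $z_0,\dots,z_N$ might revisit a zero, and distinct trajectory-arcs might cross at a common endpoint zero or could in principle coincide. The strict decrease of $\hG$ along the chain resolves the first difficulty cleanly: since $\hG(z_0)>\hG(z_1)>\cdots>\hG(z_N)$, the intermediate zeros are pairwise distinct, so no zero is visited twice and the chain cannot close up on itself. To handle overlaps of the arcs themselves, I would argue that two arcs $\overline{\gamma_i}$ and $\overline{\gamma_j}$ with $i\neq j$ can meet only at a shared endpoint zero, because a nonconstant trajectory of a flow is determined by any of its points, so two trajectories are either disjoint or identical, and they cannot be identical here as their endpoint zeros differ. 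Should the minimal chain nonetheless produce a coincidence, I would pass to a shortest such chain (minimal $N$), which eliminates repetitions and guarantees the arcs meet only consecutively and only at the shared zeros $z_j$; the resulting map is then injective. This is the one genuinely delicate point, and I expect the cleanest write-up to invoke minimality of the chain from the outset so that injectivity and the strict monotonicity of $\hG$ are obtained simultaneously.
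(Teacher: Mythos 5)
Your construction follows the same basic strategy as the paper (concatenate a chain of saddle connections and use the strict monotonicity of $\hG$ along the flow to obtain injectivity), but your first step --- upgrading the ``for all neighborhoods'' condition in the definition of $\succ$ to a single \emph{finite} chain with $\om(\gamma_p)=x$ and $\al(\gamma_q)=x'$ exactly --- contains a genuine gap, and it occurs in precisely the case the lemma is needed for. First, you justify this by appealing to ``the finiteness of $\cZ$ (established in the proof of Theorem~\ref{T.main2})'', but that finiteness is itself proved \emph{using} Lemma~\ref{L:curve}: the lemma is invoked twice in the Finiteness step of that proof, before any bound on $\#\cC$ is available, so the argument is circular. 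Second, and independently of the circularity, the zeros of $X$ need not form a discrete set: Proposition~\ref{L:halfb} only gives that each critical point is isolated in $\cM$, and the paper explicitly warns that critical points may accumulate at the points of $\cP$. In the application inside Theorem~\ref{T.main2} one takes $x'=p\in\cP$, exactly where such accumulation can occur; there one cannot shrink $V$ so as to ``isolate $x'$ from the rest of $\cZ$'', the chains furnished by the definition of $\succ$ may grow in length as $V$ shrinks, and no finite chain with $\al(\gamma_q)=x'$ need exist. Your later device of ``passing to a shortest chain'' presupposes finiteness as well.

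The paper's proof avoids this by allowing the chain to be countably infinite: by Zorn's lemma it extracts a (possibly infinite) sequence $\{\gamma_j\}_{j=\overline p}^{\overline q}$ of saddle connections with $\lim_{j\to\overline p}\om(\gamma_j)=x$ and $\lim_{j\to\overline q}\al(\gamma_j)=x'$, and then parametrizes the \emph{closure} of the union of these trajectories; injectivity again follows from the strict monotonicity of $\hG\circ\Gamma$, exactly as in your argument. The remainder of your write-up (invariance, strict decrease of $\hG$, distinctness of the intermediate zeros, and disjointness of distinct trajectories) is sound and matches the paper, but you need to replace the finite-chain reduction by this limiting construction for the lemma to be usable where it is actually applied.
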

\begin{proof}
By the definition of the partial order $\succ$ and Zorn's lemma, there exists a countable
sequence $\{\gamma_j\}_{j=\overline p}^{\overline q}$ ($-\overline p,\overline q\in\NN\cup\{\infty\}$) of
nonconstant trajectories of the field $X$ satisfying Conditions (i) and (ii) in the proof of Theorem~\ref{T.main2}, and
such that
\[
\lim_{j\to\overline  p}\om(\ga_j)=x\,,\qquad \lim_{j\to \overline q}\al(\ga_j)=x'\,.
\]
Let us consider any continuous parametrization
\[
\Gamma:[0,1]\to\overline{\bigcup_{j=\overline p}^{\overline q}\gamma_j(\RR)}\subset\Si
\]
mapping $0$ to $x$ and $1$ to $x'$. Since
the Lie derivative $\cL_X\hG$ is positive in $\Si\minus\cZ$, $\hG$ is 
increasing along nonconstant trajectories, which implies that $\hG\circ\Gamma$ is
strictly decreasing (notice that the definition of $\Ga$ accounts for
the fact that this function is decreasing instead of
increasing). Therefore, $\Ga$ is injective. Moreover, the curve $\Gamma([0,1])$ is clearly invariant
because it is the union of trajectories.
\end{proof}

\begin{lemma}\label{L:h}
For any $z\in\cC\cup\{\By\}$ there exists
some $p\in\cP$ such that $z\succ p$. Moreover, there are no $x\in\cZ$
such that $x\succ x$ or $p\succ x$ for some $p\in\cP$ (i.e., the elements in $\cP$
are minimal with respect to the partial order).
\end{lemma}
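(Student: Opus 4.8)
The plan is to read off everything from the fact, already recorded in the proof of Theorem~\ref{T.main2}, that $\hG$ is a strict Lyapunov function for $X$: it is continuous on $\Si$ with values in $[-\infty,+\infty]$, it attains its unique global maximum $+\infty$ at the attractor $\By$ and its global minimum (either $0$ or $-\infty$, and in either case simultaneously at \emph{all} points of $\cP$, since $\la_2\geq1$ forces $\la_1'=0$), and $\cL_X\hG>0$ off $\cZ$. The first thing I would establish is the compatibility of the order with this function: if $x\succ x'$ then along the connecting chain $\gamma_{p},\dots,\gamma_{q}$ one has $\hG(\om(\gamma_p))>\hG(\al(\gamma_p))=\hG(\om(\gamma_{p+1}))>\cdots>\hG(\al(\gamma_q))$, so letting the neighborhoods $U\ni x$, $V\ni x'$ shrink and using continuity of $\hG$ gives $\hG(x)\geq\hG(x')$, with a strict inequality whenever $x\neq x'$ because at least one nonconstant orbit contributes a genuine drop. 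Everything else is a consequence of this monotonicity together with the local models of Propositions~\ref{L.pole} and~\ref{L:halfb}.

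For the existence statement $z\succ p$, I would argue by a \emph{descent along stable separatrices}. For a saddle $z\in\cC$, Proposition~\ref{L:halfb} guarantees that $W^s(z)\minus\{z\}$ is nonempty, so I pick an orbit $\gamma$ with $\om(\gamma)=z$; its $\al$-limit $z_1:=\al(\gamma)$ is a zero with $\hG(z_1)<\hG(z)$, which cannot be $\By$ (the global maximum) and hence lies in $\cC\cup\cP$. If $z=\By$ I start instead from any orbit of the two-dimensional basin with $\om(\gamma)=\By$. Iterating produces a chain $z\succ z_1\succ z_2\succ\cdots$ with strictly decreasing $\hG$-values. Since a saddle always has a lower stable separatrix, the descent can terminate only at a point where no further descent is possible, i.e.\ a local minimum, which by construction is exactly a point of $\cP$; and if the chain is infinite, the distinct zeros $z_k$ must accumulate, and as $\cZ$ accumulates only on $\cP$ the limit point lies in $\cP$. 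In either case the (possibly infinite) chain realizes $z\succ p$ for some $p\in\cP$ in the sense of the definition.

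For the minimality and acyclicity clauses I would use the repelling nature of $\cP$. Each $p\in\cP$ is a source for $X$ (a strict minimum of $\hG$), so its stable set is trivial, $W^s(p)=\{p\}$, and no nonconstant orbit has $\om$-limit equal to $p$. Hence $p$ cannot head a nonconstant descending chain: combining this with the value inequality, a relation $p\succ x'$ would force $\hG(x')<\hG(p)=\min_\Si\hG$, which is absurd, so no element of $\cP$ dominates anything. Acyclicity $x\not\succ x$ is then immediate, since $x\succ x$ would give $\hG(x)>\hG(x)$; and the same value inequality shows the elements of $\cP$ are exactly the minimal elements.

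The hard part will be the minimality clause in the presence of a possible accumulation of critical points at $\cP$, which at this stage of the argument we are \emph{not} yet entitled to exclude (its exclusion is the very content of the finiteness step that invokes this lemma). If saddles of $\cC$ accumulate at some $p\in\cP$, then $p$ need not be an isolated zero and the naive reading ``$\om(\gamma_p)\to p$ is impossible'' fails, because the chain-top could be a nearby saddle. The clean way around this is to run the minimality argument purely through the Lyapunov function rather than through isolatedness: the bottom of any chain dominated by $p$ would have $\hG$-value strictly below values arbitrarily close to $\min_\Si\hG$, hence below the minimum itself, which is the sought contradiction. One must check that this value estimate survives the shrinking-neighborhood limit, and that the descent in the existence part genuinely lands in $\cP$ rather than spiralling among saddles accumulating at a finite level above the minimum; both points are handled by the strictness of $\cL_X\hG$ off $\cZ$ and the identification of $\cP$ as the full set of global minima.
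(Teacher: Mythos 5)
Your existence argument (descent along stable separatrices, with the infinite-descent case handled by the fact that the zeros of $X$ accumulate only at $\cP$) is essentially the paper's proof of the first clause, and it is sound: there the inequalities $\hG(z_k)>\hG(z_{k+1})$ are genuinely strict because each step is witnessed by an actual nonconstant trajectory. The gap is in the acyclicity and minimality clauses. The relation $\succ$ is defined through arbitrary neighborhoods $U\ni x$, $V\ni x'$, and your shrinking-neighborhood limit only yields $\hG(x)\geq\hG(x')$ --- as you yourself state, strictness is claimed only for $x\neq x'$ --- yet you then invoke the strict inequality $\hG(x)>\hG(x)$ to rule out $x\succ x$. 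Worse, for the minimality clause the same limit gives only $\hG(x')\leq\hG(p)=\min_\Si\hG$, i.e.\ $x'\in\cP$; it does not exclude $p\succ p'$ with both points in $\cP$, which is exactly the dangerous case you flag, when saddles of $\cC$ accumulate at $\cP$: the chain-top may then be a nearby saddle with $\hG$-value strictly above the minimum, so no contradiction arises. Your proposed patch --- ``$\hG$-value strictly below values arbitrarily close to $\min_\Si\hG$, hence below the minimum itself'' --- is a non sequitur: being strictly below quantities converging to $m$ gives $\leq m$, not $<m$.

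The paper closes precisely this gap by routing both clauses through Lemma~\ref{L:curve}: from the neighborhood chains one extracts (via Zorn's lemma) a single injective invariant path $\Gamma$ joining $x$ to $x'$ \emph{exactly}, along which $\hG\circ\Gamma$ is strictly decreasing; then $x\succ x$ forces $\hG(x)>\hG(x)$, and $p\succ x$ forces $\hG(x)<\hG(p)=\min_\Si\hG$, both absurd, with no isolatedness of the zeros required. You should either invoke Lemma~\ref{L:curve} at this point or supply the missing uniform drop yourself: for $x\in\cC\cup\{\By\}$ the zero is isolated with finitely many stable separatrices, so the first link of any chain leaving a small enough $U$ has its $\alpha$-limit among finitely many zeros whose $\hG$-values are all at most $\hG(x)-\epsilon_0$ for a fixed $\epsilon_0>0$; for $p\in\cP$, where saddles may accumulate, the path-based argument of Lemma~\ref{L:curve} is the one that actually works.
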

\begin{proof}
Let us take an element $z\in\cC$. By
Proposition~\ref{L:halfb}, there exists a neighborhood $U$ of
$z$ such that $(W^s(z)\cap U)\minus\{z\}$ has at
least two components $C_1,C_2$ and each $C_i$ is a piece of a
trajectory of $X$. Since $X$ is gradient-like, the $\al$-limit set
of $C_1$ is another zero $z_1\in\cC\cup\cP$.

If $z_1\in\cP$, the statement follows. Otherwise, we can repeat the
previous argument replacing $z$ by $z_1$. As $X$ is gradient-like,
proceeding this way we obtain a sequence of distinct points
$z_1,z_2,\dots$ in $\cZ$. If there is some point $z_k\in\cP$, we are
done, so we can assume that there is a sequence $(z_k)_{k=1}^\infty$
of distinct points of $\cZ$ with $z_k\succ z_{k+1}$. Since
$\Psi^{-1}(\cC)$ consists of isolated points in $\cM$ by
Proposition~\ref{L:halfb}, it follows that $\dist_{g_0}(z_k,\cP)$
tends to zero as $k\to\infty$.  Hence there must exist some $p\in\cP$
such that a subsequence $(z_{k'})_{k'\in J}$
tends to $p$. As above, it then follows that the initial sequence
$(z_k)_{k=1}^\infty$ also tends to $p$ because of the fact that $p$ is an isolated minimum of $\hG$ and
$\hG$ is increasing along trajectories.

Notice that the proof also applies when we start with the point $\By$
instead of a point $z\in\cC$. Finally, note that obviously $x\not\succ x$ for any $x\in\cZ$ by
Lemma~\ref{L:curve}, and that $p\not\succ x$ because each $p\in\cP$ is an
isolated minimum of $\hG$ and $\hG$ is increasing along the flow of the
field $X$.
\end{proof}

In the following theorem we show that the basin boundary $\hF$ encodes the
topology of the surface $\Si$. In particular, the flow of the field $X$
defines in a natural way a decomposition of $\Si$ into a disk $\hD$
and its $1$-skeleton $\hF$. This decomposition is similar to the one
arising when one considers the cut locus of a point in $\Si$ (with respect
to some metric $g_0$) but it is generally different. Together with
Proposition~\ref{P.hF} and Corollary~\ref{C.cF}, this provides a rigorous reformulation of the
Heuristic Principle stated in the Introduction.

\begin{theorem}\label{P.hF2}
The set $\hF$ is a connected graph with the same homology as the
surface $\Si$:
\[
H_1(\hF;\ZZ)=\ZZ^{2\nu}\,.
\]
\end{theorem}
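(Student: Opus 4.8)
The plan is to show that $\hF$ is a graph, then compute its homology via the complementary disk $\hD$.

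First I would verify that $\hF$ is a graph, that is, a one-dimensional CW complex. The vertices will be the zeros of $X$ lying in $\hF$, namely the set $\cC\cup\cP$, which is finite by the finiteness part of Theorem~\ref{T.main2}. The edges will be the nonconstant trajectories of $X$ joining these zeros: by Proposition~\ref{P.hF}, $\hF=\cP\cup\bigcup_{z\in\cC}W^s(z)$, and Proposition~\ref{L:halfb} tells us that near each $z\in\cC$ the stable set consists of exactly $m\geq2$ smooth curves (the $m$ stable half-branches, where $1-m$ is the index). Since $X$ is gradient-like, each such half-branch, followed backward in time, has an $\al$-limit that is again a zero of $X$, so each edge is a trajectory running between two vertices. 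I would invoke Lemma~\ref{L:curve} to realize each saddle connection as an injective invariant path on which $\hG$ is strictly monotone, guaranteeing that distinct edges meet only at common endpoints; the strict monotonicity of $\hG$ along trajectories forbids any interior self-intersection or accumulation. Hence $\hF$ is a finite graph, and it is connected by Proposition~\ref{P.hF}.

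The homology computation is the heart of the matter, and the natural tool is the decomposition $\Si=\hD\cup\hF$ with $\hF=\pd\hD$ (Remark~\ref{R.X}). Since $\hD$ is an open disk and $\hF=\Si\minus\hD$ is a deformation retract of a closed collar neighborhood $N$ of $\hF$, I would apply the Mayer--Vietoris sequence to the cover of $\Si$ by $\hD$ (thickened slightly) and $N$, whose intersection is an annulus (homotopy equivalent to a circle). Using $H_*(\hD)=H_*(\text{pt})$, $H_*(N)=H_*(\hF)$, and $H_*(\hD\cap N)=H_*(S^1)$, together with the known homology $H_0(\Si)=\ZZ$, $H_1(\Si)=\ZZ^{2\nu}$, $H_2(\Si)=\ZZ$ of the closed genus-$\nu$ surface, the sequence reads
\[
0\to H_2(\Si)\to H_1(S^1)\to H_1(\hF)\to H_1(\Si)\to 0\,,
\]
where I have used that $H_1(\hD)=0$ and that $H_0$ of all pieces inject compatibly. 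The boundary map $H_2(\Si)=\ZZ\to H_1(S^1)=\ZZ$ is an isomorphism (the fundamental class of $\Si$ pairs with the meridian of the annulus bounding the disk $\hD$), so it kills the contribution of the annulus, and exactness then gives $H_1(\hF)\cong H_1(\Si)=\ZZ^{2\nu}$.

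The step I expect to be the main obstacle is the rigorous setup of the Mayer--Vietoris cover: one must check that $\hF$, a priori merely a closed invariant set with empty interior, genuinely admits a collar neighborhood $N$ deformation-retracting onto it with annular frontier against $\hD$. This is where the graph structure established in the first paragraph is indispensable, since a tame finite graph embedded in a surface has a regular (ribbon) neighborhood, whereas a pathological invariant set need not. An alternative that sidesteps the collar is to compute the Euler characteristic directly: the vertex--edge count of the graph gives $\chi(\hF)=1-\operatorname{rank}H_1(\hF)$, while the cell decomposition $\Si=\hD\cup\hF$ gives $\chi(\Si)=\chi(\hF)+1=2-2\nu$ (the disk contributing a single $2$-cell of Euler contribution $+1$). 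Combining these yields $\operatorname{rank}H_1(\hF)=2\nu$ at once, and connectedness of $\hF$ then upgrades this to the stated isomorphism $H_1(\hF;\ZZ)=\ZZ^{2\nu}$. I would present the Euler-characteristic argument as the primary route, noting that it exploits exactly the index data of Proposition~\ref{L:halfb} and Hopf's theorem already used in Theorem~\ref{T.main2}.
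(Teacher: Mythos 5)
Your proposal is correct, and the first part (exhibiting $\hF$ as a finite connected graph with vertex set $\cC\cup\cP$ and edges the saddle connections) coincides with the paper's argument. Where you diverge is the homology computation. The paper does not decompose $\Si$ as $\hD\cup N$; instead it uses the dynamics one more time: for a small disk $B$ about $\By$, the sets $\Si\minus\phi_t(B)$ with $t\leq 0$ form a decreasing family of deformation retracts of $\Si\minus\{\By\}$ whose intersection is exactly $\hF$, so $\hF$ is a strong deformation retract of $\Si\minus\{\By\}$ and one only needs the elementary fact $H_1(\Si\minus\{\By\};\ZZ)=H_1(\Si;\ZZ)$ (this is where the paper's single, much easier Mayer--Vietoris step occurs). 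This flow-induced retraction precisely sidesteps the obstacle you identify, namely justifying a collar/regular neighborhood $N$ of $\hF$ with annular overlap against a thickened $\hD$: the retraction is manufactured by $\phi_t$ rather than by a tubular-neighborhood theorem for the embedded graph. Your Euler-characteristic alternative is a legitimate and arguably more elementary substitute --- it reuses the index bookkeeping of Theorem~\ref{T.main2} and the freeness of $H_1$ of a graph lets you avoid computing the connecting map $H_2(\Si)\to H_1(S^1)$ --- but it silently requires that $\Si$ is obtained from the graph $\hF$ by attaching a single $2$-cell along $\hD$ (equivalently, additivity of the compactly supported Euler characteristic over $\Si=\hD\sqcup\hF$), which is true here but deserves the same tameness justification you flag for the collar. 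Net assessment: both routes work; the paper's buys a cleaner retraction for free from the gradient-like flow, while yours is more combinatorial and independent of the flow once the graph structure is in hand.
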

\begin{proof}
We showed in Proposition~\ref{P.hF} that $\hF$ consists of the zeros
of $X$ other than~$\By$ (that is, $\cC\cup\cP$) and their stable
components, which are continuous curves with endpoints belonging to
$\cC\cup\cP$. Since $\cC$ is finite by Theorem~\ref{T.main2}, it then
follows that $\hF$ is a connected graph.

Let $B$ be a small disk in $\Si$ centered at $\By$ and let us consider
its image under the time-$t$ flow of $X$, $\phi_t(B)$. It is apparent
that $\Si\minus\{\By\}$ deform retracts onto the set 
\[
\Si\minus\phi_t(B)\,,
\]
for any $t\leq 0$. Moreover, 
\[
\hF=\bigcap_{t\leq0}\big(\Si\minus\phi_t(B)\big)
\]
by the definition of the set $\hF$. This ensures that $\hF$ is a
strong deformation retract of $\Si\minus\{\By\}$, so it is well known
that, $\hF$ being a connected graph, we have the isomorphism of
homology groups
\[
H_1(\hF;\ZZ)=H_1(\Si\minus\{\By\};\ZZ)\,.
\]
Since 
\[
H_1(\Si\minus\{\By\};\ZZ)=H_1(\Si\minus B;\ZZ)=H_1(\Si;\ZZ)=\ZZ^{2\nu}
\]
by a standard argument using the Mayer--Vietoris sequence, the theorem follows.
\end{proof}

\begin{remark}
Because of the connection between $\cF$ and $\hF$, Theorem~\ref{P.hF2}
also gives very detailed information about the structure
of the set $\cF$. In particular, $\cF$ is a graph but is necessarily noncompact
and possibly disconnected. Moreover, the rank of $H_1(\cF;\ZZ)$ is at most
$2\nu$ but can be strictly smaller than this number, as some of the
cycles that appear in $\hF$ can be killed after removing the points of
$\Si$ that correspond to the ends of the noncompact surface.
\end{remark}

To conclude, let us present an illustrative example in which the different sets
that we have been discussing in Sections~\ref{S:basin}
and~\ref{S:bounds} can be computed explicitly:

\begin{example}
Let $M$ be the torus minus the point $p:=(0,\pi)$, written in terms
of the standard $2\pi$-periodic coordinates on the torus. We choose a
complete, conformally flat metric $g$ on $M$, and fix the position of
the pole at $y=(0,0)$. It is then standard that Li and Tam's
procedure~\cite{LT87} gives rise to a Green's function $G$ invariant under
the isometric transformations of the flat torus that fix the point
$y$.

It is then clear that the curves
\[
\{0\}\times\SS^1\,,\quad \{\pi\}\times\SS^1\,,\quad \SS^1\times
\{0\}\,, \quad \SS^1\times \{\pi\}
\]
are then invariant under the local flow of $\nabla_gG$. An easy
argument using the symmetries then show that the points $(\pi,0)$ and
$(\pi,\pi)$ are zeros of the field $\nabla_gG$, which must be
nondegenerate (hence hyperbolic saddles). These are the only zeros because
the upper bound in Theorem~\ref{T.main2} is attained.

The set $\hF$
consists of the two circles $\{\pi\}\times\SS^1$ and
$\SS^1\times\{\pi\}$, which respectively correspond to (two) saddle
connections and to the trajectories of $X$ that connect a saddle with
the point $p$. Since $\hF$ contains two independent cycles,
$H_1(\hF;\ZZ)$ is isomorphic to the first homology group of the
torus. The set $\cF$ is then given by
\[
\cF=\hF\,\minus\{p\}\,,
\]
so is consists of a closed curve and an open curve. In particular, $H_1(\cF;\ZZ)=\ZZ$.
\end{example}

\section{Applications and remarks}
\label{S:remarks}

In this last section we will make some remarks about the connections
between the critical points of the Green's function and the
conformal properties of the surface. We will end with some comments about
Green's functions on surfaces of infinite topological type.

\subsection*{Conformal structure and dynamics} A consequence of
Theorem~\ref{T.main2} is that by analyzing the dynamics of the gradient field
$\nabla_gG$ (or, equivalently, of the field~$X$) we can sometimes
extract information about the conformal structure of the underlying
surface. For example, a surfaces must satisfy very stringent geometric
conditions in order to admit a Green's function without any critical
points, as we show in the following 

\begin{proposition}
Let us suppose that the surface $M$ has at least two ends. If all its
ends are either hyperbolic or non-removable singularities, then any
Green's function on $M$ has at least one critical point.
\end{proposition}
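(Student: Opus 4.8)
The plan is to derive a contradiction from the assumption that $G$ (equivalently $\BG$, or the field $X$) has no critical points, by exploiting the index formula established in the proof of Theorem~\ref{T.main2}. Under the stated hypotheses, every end is either hyperbolic or a non-removable parabolic singularity, which means that the set $\cP$ of minima of $\hG$ consists of exactly the points coming from the ends: if there is at least one hyperbolic end then by Proposition~\ref{P.hypends} all parabolic ends are removable and $\cP = \{q_1,\dots,q_{\la_2}\}$, while if all ends are parabolic then by hypothesis none is removable, so $\cP = \{p_1,\dots,p_{\la_1}\}$ with $\la_1' = \la_1$. In either case the cardinality $\la'$ of $\cP$ equals the number of ends of $M$, which by assumption is at least $2$.

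First I would set $\cC = \varnothing$ (the no-critical-points assumption) and apply Hopf's index theorem to the field $X$ on the closed surface $\Si$, exactly as in Eqs.~\eqref{eq1} and~\eqref{eq2}. Each point of $\cP$ is an isolated local repeller of index $1$, and the pole $\By$ is an isolated local attractor of index $1$, so the index sum reads
\[
\ind(\By) + \sum_{z\in\cC}\ind(z) + \sum_{p\in\cP}\ind(p) = \chi(\Si) = 2 - 2\nu\,.
\]
With $\cC$ empty this collapses to $1 + \la' = 2 - 2\nu$, that is, $\la' = 1 - 2\nu$. Since $\nu \geq 0$, this forces $\la' \leq 1$, contradicting the hypothesis that $M$ has at least two ends (so $\la' \geq 2$). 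Hence $\cC$ cannot be empty and $\BG$ has at least one critical point, which under the orbital conjugacy~\eqref{conjug} corresponds to a critical point of $G$ on $M$.

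The main subtlety to get right is the bookkeeping relating the number of ends to $\la'$ and the verification that the index contributions are all $+1$. I would invoke the finiteness of $\cZ$ from Theorem~\ref{T.main2} to guarantee the points of $\cP$ and the pole are isolated zeros, so that the local index is well defined, and then cite the characterization of these points as minima (for $\cP$) and maximum (for $\By$) of $\hG$ established in Section~\ref{S:basin}. The hypothesis that no end is a removable singularity is exactly what ensures every end contributes a genuine zero of $X$ lying in $\cP$, so that $\la'$ counts all the ends; without it some ends could be removable and would not appear in the index sum. The argument is otherwise a direct specialization of the proof of Theorem~\ref{T.main2}, so I expect no serious obstacle beyond stating the end-counting carefully.
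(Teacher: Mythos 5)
Your argument is correct and is essentially the paper's own proof: both rest on the Hopf index identity from Theorem~\ref{T.main2} ($-\sum_{z\in\cC}\ind(z)=2\nu-1+\la'$) together with the observation that the hypothesis rules out removable singularities, so that $\la'$ counts all the ends (hence $\la'\geq 2$) and every element of $\cC$ is a genuine critical point of $G$ on $M$ rather than a zero of $\nabla_{g_0}\BG$ sitting at a deleted point. The only cosmetic difference is that you argue by contradiction from $\cC=\varnothing$, whereas the paper notes directly that the right-hand side is positive and, after splitting $\cC$ into interior critical points and removable-singularity zeros (the latter set being empty here), concludes that the former set is nonempty.
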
 
\begin{proof}
Eqs.~\eqref{nuevaeq} and~\eqref{nuevaeq2} mean that, with the same notation as
in the proof of Theorem~\ref{T.main2},
\begin{equation}\label{eq3}
-\sum_{z\in\cC}\ind(z)=2\nu-1+\la'\,.
\end{equation}
Let us call
\[
\cC_1:=\cC\minus\big\{ p_{\la_1'+1},\dots, p_{\la_1}\big\}
\]
and $\cC_2:=\cC\minus\cC_1$. Any of these sets can be empty, and it is
clear that the cardinality of $\cC_1$ equals the number of critical
points of the Green's function $G$ in $M$. The cardinality of $\cC_2$
is at most $\la_1-\la_1'$.

As $\la'=\la_1'+\la_2$, Eq.~\eqref{eq3} can be rewritten as
\begin{equation}\label{laultima}
-\sum_{z\in\cC_1}\ind(z)=2\nu-1+\la_1'+\la_2+\sum_{z\in\cC_2}\ind(z)\,.
\end{equation}
If all the ends are hyperbolic, $\la_1=\la_1'=0$, and if all ends are
non-removable singularities, then $\la_1=\la_1'$ and
$\la_2=0$. Therefore, in both cases $\cC_2$ is empty, so the RHS
of~\eqref{laultima} is nonzero and $\cC_1$ cannot be empty. Since
hyperbolic ends and non-removable singularities cannot coexist, the proposition is proved.
\end{proof}

\subsection*{Surfaces of infinite topology} 

It is worth emphasizing that the case of surfaces whose fundamental
group is not finitely generated is totally different from the case of
surfaces of finite type. We shall next provide examples of surfaces
that are not finitely generated both with an infinite number of
critical points and without any critical points.

\begin{example}
Let us consider the case where
$M$ is a torus of infinite genus with two reflection symmetries (see Figure~\ref{F.torus}). For
this, we can regard $M$ as a surface embedded in $\RR^3$ and invariant
under the reflections 
\[
\Pi_1(x_1,x_2,x_3):= (-x_1,x_2,x_3) \quad\text{and}\quad \Pi_3(x_1,x_2,x_3):= (x_1,x_2,-x_3)\,.
\]
We will endow $M$ with the metric $g$ induced by the Euclidean metric in
$\RR^3$ and consider a Green's function $G$ on $M$ (satisfying
Assumptions~1 and~2 in Section~\ref{S:LiTam}) with a pole at a point
$y$ invariant under the reflections $\Pi_1,\Pi_3$. Li and Tam's
procedure can be used to obtain a Green's function with the same
symmetries (that is 
\[
G=G\circ \Pi_j
\]
for $j=1,3$).

\begin{figure}[t]
\includegraphics[scale=0.25]{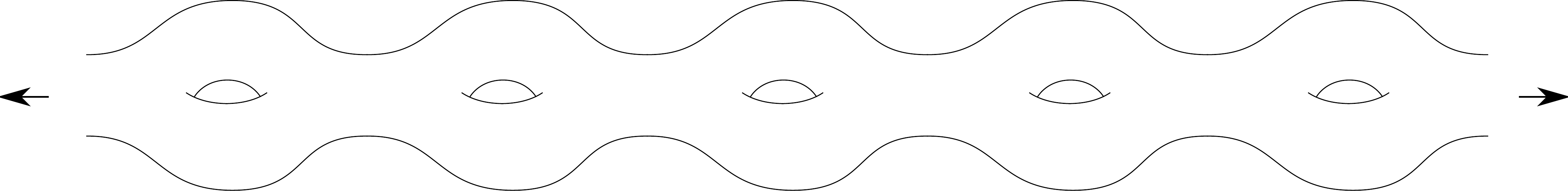}
\caption{A torus of infinite genus with two reflection symmetries.} 
\label{F.torus}
\end{figure}

It is straightforward that the intersection of the surface $M$ with
the plane $\{x_j=0\}$ is invariant under the local flow of $\nabla_g
G$, for $j=1,3$. A simple argument then shows that the intersection of
$M$ with the $x_2$-axis must be invariant too. Since it consists of
isolated points, these must therefore be either critical points of $G$
or the pole $y$. These
critical points are obviously infinite in number.
\end{example}

\begin{example}
Let us consider the unit disk
\[
\DD^2:=\big\{(x_1,x_2): x_1^2+x_2^2<1\big\}
\]
with its hyperbolic metric $g_1$. We let $M$ be the unit disk with 
infinitely points removed as
\[
M=\DD^2\minus\bigg((0,0)\cup\bigcup_{n=2}^\infty\Big(0,\frac1n\Big)\bigg)\,.
\]
The hyperbolic metric $g_1$ is not complete on $M$, but it is well
known~\cite{Nomizu} that there is a conformally equivalent metric
$g=\chi g_1$ such that $(M,g)$ is complete. 

Consider the minimal Green's function $G$ with a pole at a point $y\in
M$. By the conformal invariance of the Laplacian, this Green's
function is precisely the minimal Green's function of the disk $\DD^2$
with the hyperbolic metric $g_1$ (or rather its restriction to
$M$). Since the gradient of the minimal Green's function of the
hyperbolic disk $(\DD^2,g_1)$ does not vanish by
Theorem~\ref{T.main2}, $(M,g)$ is an example of a surface that is not
finitely generated whose minimal Green's function does not have any
critical points. Incidentally, notice that, although the proof of
Theorem~\ref{T.main2} does not apply to surfaces of infinite
topological type, zero is exactly the upper bound one gets by
recklessly applying the formula $2\nu-1+\la_2$ in this case (notice
that for $M$ we have $\nu=0$, $\la_1=\infty$, $\la_1'=0$, $\la_2=1$).
\end{example}

\section*{Acknowledgments}

This work is supported in part by the Spanish MINECO under
grants~FIS2011-22566 (A.E.), MTM2010-21186-C02-01 (D.P.-S.) and the ICMAT Severo
Ochoa grant SEV-2011-0087 (A.E.\ and D.P.S.), and by Banco
Santander--UCM under grant~GR35/10-A-910556 (A.E.). The authors
acknowledge the Spanish Ministry of Science for financial support
through the Ram\'on y Cajal program.

\bibliographystyle{amsplain}

\end{document}